\title{Quantitative results for Halpern iterations of nonexpansive mappings}
\author{ Daniel K\"ornlein\thanks{The 
author has been 
supported by the German Science Foundation (DFG 
Project KO 1737/5-2).}\\[0.2cm]
Department of Mathematics \\  Technische Universit\"at Darmstadt\\ 
Schlossgartenstra\ss{}e 7, 64289 Darmstadt, Germany}
\newtheorem{theorem}{Theorem}[section]
\newtheorem{proposition}[theorem]{Proposition}
\newtheorem{corollary}[theorem]{Corollary}
\theoremstyle{definition}
\newtheorem{lemma}[theorem]{Lemma}
\newtheorem{definition}[theorem]{Definition}
\theoremstyle{remark}
\newtheorem{remark}[theorem]{Remark}
\newcommand{\NN}{\mathbb{N}}
\newcommand{\RR}{\mathbb{R}}
\begin{document}

\maketitle

\begin{abstract}
We give a rate of metastability for Halpern's iteration relative to a rate of metastability for the resolvent for nonexpansive mappings in uniformly smooth Banach spaces, extracted from a proof due to Xu. In Hilbert space, the latter is known, so we get an explicit rate of metastability. We also extract a rate of asymptotic regularity for general normed spaces. Such rates have already been extracted by Kohlenbach and Leu\c{s}tean for different and incomparable conditions on $(\lambda_n)$. The proof analyzed in this paper is also more effective than the proofs treated by Kohlenbach and Leu\c{s}tean in that it does not use Banach limits or weak compactness, which makes the extraction particularly efficient. Moreover, we also give an equivalent axiomatization of uniformly smooth Banach spaces. This paper is part of an ongoing case study of proof mining in nonlinear fixed point theory.
\end{abstract}
\let\thefootnote\relax\footnotetext{2010 \textit{Mathematics Subject Classification.} Primary 47J25, 90C25, 47H09, 03F10}

\section{Introduction}
Let $H$ be a Hilbert space, $C\subseteq H$ be a convex closed subset and $S:C\to C$ be nonexpansive. For some starting point $x_0\in C$, some anchor $u\in C$ and some sequence $(\alpha_n)\subset\left[0,1\right]$, the Halpern iteration is defined by
\begin{equation}\label{HalpernIteration}
 x_{n+1}:=\alpha_nu+(1-\alpha_n)Sx_n.
\end{equation}
The scheme was first introduced in \cite{Halpern(67)}, albeit only when $C$ is the closed unit ball and $u=0$. For this case, Halpern \cite{Halpern(67)} also gave a set of necessary and a set of sufficient conditions for $(\alpha_n)$ under which the scheme \eqref{HalpernIteration} converges strongly to a fixed point of $S$. However, Halpern's conditions allowed no conclusion whether the natural choice $\alpha_n=1/(n+1)$ is admitted.
Wittmann \cite{Wittmann(92)} answered this question in the affirmative in 1992: If $S$ has a fixed point and the sequence $(\alpha_n)$ satisfies
\begin{multicols}{3}
\begin{enumerate}[label=(\roman{*}), ref=(\roman{*})]
\item\label{i} $\lim\limits_{n\rightarrow\infty}\alpha_n=0$,
\item\label{ii} $\sum\limits_{n=0}^{\infty}\alpha_n=\infty$,
\item\label{iii} $\sum\limits_{n=0}^{\infty}\left\vert\alpha_{n+1}-\alpha_n\right\vert<\infty$,
\end{enumerate}
\end{multicols}
\noindent then the Halpern iteration converges strongly to the fixed point closest to the starting point $x_0$.

Using proof-theoretic methods exhibited in Kohlenbach \cite{Kohlenbach(08),KohlenbachLeustean(12a)}, Leu\c{s}tean \cite{Leustean(07)} extracted from Wittmann's proof a rate of asymptotic regularity for general normed and even hyperbolic spaces, \textit{i.e.}, a rate of convergence for $\left\Vert Tx_n-x_n\right\Vert\to0$ under the assumption that the Halpern iteration remains bounded, which is always the case if $S$ has a fixed point. The rate is highly uniform in the sense that it does not depend on the set $C$, the operator $S$ or the specific choice of the sequence $(\alpha_n)$, but only on witnesses for the existential quantifiers in conditions \ref{i} to \ref{iii} above and a bound on, in essence, the sequence $(x_n)$.

Strong convergence is then established by Wittmann using the metric projection of $x_0$ onto the fixed point set and weak sequential compactness applied to the iteration sequence. As shown by Avigad, Gerhary and Towsner in \cite{AvigadGerhardyTowsner(10)}, there cannot be a computable bound on the rate of convergence even for the special case where $\alpha_n=1/n+1$ and $S$ is linear. In this case, the Halpern iteration coincides with the ergodic average, and so Wittmann's theorem implies von Neumann's mean ergodic theorem.

On the other hand, a uniform rate of metastability (in the sense of Tao \cite{Tao(08),Tao(08a)}) is guaranteed to exist by a general metatheorem of Kohlenbach \cite{Kohlenbach(10)} and was extracted by the same author in \cite{Kohlenbach(11)}. A rate of metastability is a bound on the existential quantifier in the Herbrand normal form of the statement that $\left(x_n\right)$ is Cauchy:
\begin{equation}\label{metastable}
 \forall\varepsilon>0\forall g:\NN\to\NN\exists n\le\Phi(\varepsilon,g)\forall i,j\in\left[n;n+g(n)\right]\Bigl(\left\Vert x_i-x_j\right\Vert<\varepsilon\Bigr),
\end{equation}
where $\left[n;n+g(n)\right]:=\{n,n+1,\ldots,n+g(n)\}$. The bound is highly uniform in the sense that it does not depend on the operator $S$, the starting point $x_0$, the anchor $u$ or the specific Hilbert space. Apart from rates of convergence and divergence for the conditions \ref{i} to \ref{iii}, it only depends on an upper bound on the distance of the starting point from some fixed point of $S$.
In the case $\alpha_n=1/n+1$, Kohlenbach \cite{Kohlenbach(11)} also improved the exponential rate of asymptotic regularity to a quadratic one.
Moreover, these results were also generalised to CAT(0) spaces \cite{KohlenbachLeustean(12)} and CAT($\kappa$) spaces \cite{Leustean(13)}

Closely related to Wittmann's result is the following
\begin{theorem}[Browder \cite{Browder(67a)}]
 Let $H$ be a Hilbert space, $S$ a nonexpansive mapping of $H$ into $H$. Suppose that there exists a bounded closed convex subset $C$ of $H$ mapped by $S$ into itself. Let $u_0$ be an arbitrary point of $C$, and for each $t$ with $0<t<1$, let $S_tx=tSx+(1-t)u_0$.
 
 Then $S_t$ is a strict contraction of $H$ with ratio $t$, $S_t$ has a unique fixed point $z_t$ in $C$, and $z_t$ converges as $t\to1$ strongly in $H$ to a fixed point $v$ of $S$ in $C$. The fixed point $v$ is uniquely specified as the fixed point of $S$ closest to $u_0$.
\end{theorem}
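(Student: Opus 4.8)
The plan is to prove Browder's theorem in three stages: (1) existence and uniqueness of $z_t$ via Banach's fixed point theorem; (2) strong convergence of $z_t$ as $t\to 1$; (3) identification of the limit as the nearest fixed point.

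For stage (1), I would observe that for fixed $t\in(0,1)$ the map $S_t x = tSx + (1-t)u_0$ satisfies $\|S_tx - S_ty\| = t\|Sx-Sy\| \le t\|x-y\|$, so $S_t$ is a $t$-contraction; moreover $S_t$ maps the bounded closed convex set $C$ into itself (since $C$ is convex and contains $u_0$ and $SC \subseteq C$), so $C$ is a complete metric space and Banach's contraction principle gives a unique fixed point $z_t \in C$ with $z_t = tSz_t + (1-t)u_0$.

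For stage (2), the key estimate is the one coming from the fixed point equation: for any fixed point $p$ of $S$ in $C$ (such a $p$ exists by, e.g., the fact that $z_t$ stays in the bounded set $C$ — or one can run the argument purely with the $z_t$'s), one has $z_t - p = t(Sz_t - p) + (1-t)(u_0 - p) = t(Sz_t - Sp) + (1-t)(u_0-p)$. Taking inner products with $z_t - p$ and using $\langle Sz_t - Sp, z_t - p\rangle \le \|z_t-p\|^2$ yields $\|z_t - p\|^2 \le t\|z_t-p\|^2 + (1-t)\langle u_0 - p, z_t - p\rangle$, hence
\[
\|z_t - p\|^2 \le \langle u_0 - p, z_t - p\rangle,
\]
which already shows $\|z_t - p\| \le \|u_0 - p\|$, so $(z_t)$ is bounded. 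To get convergence, I would take $t, s \in (0,1)$ and subtract the two fixed-point equations, again pair with $z_t - z_s$, and use nonexpansivity of $S$ to derive an inequality forcing $\|z_t - z_s\| \to 0$ as $t,s\to 1$; alternatively (the cleaner route) I would use the boundedness together with the characterization from the displayed inequality above, passing to a weakly convergent subnet $z_{t_k} \rightharpoonup v$ and using the demiclosedness of $I - S$ at $0$ (which holds for nonexpansive maps in Hilbert space) to conclude $Sv = v$; then plugging $p = v$ into $\|z_{t_k} - v\|^2 \le \langle u_0 - v, z_{t_k} - v\rangle \to 0$ by weak convergence gives strong convergence along the subnet, and a standard argument upgrades this to convergence of the whole net.

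For stage (3), the displayed inequality $\|z_t - p\|^2 \le \langle u_0 - p, z_t - p\rangle$ for \emph{every} fixed point $p$, passed to the limit, gives $0 \le \langle u_0 - p, v - p\rangle$, i.e. $\langle u_0 - v, v - p\rangle \ge 0$ for all fixed points $p$ of $S$ in $C$; since $\mathrm{Fix}(S)\cap C$ is closed and convex, this variational inequality is exactly the characterization of $v$ as the metric projection of $u_0$ onto $\mathrm{Fix}(S)\cap C$, i.e. the fixed point closest to $u_0$, which also yields uniqueness of the limit. \emph{The main obstacle} I expect is stage (2): making rigorous the passage from weak subnet convergence to strong convergence of the full net $(z_t)_{t\to 1}$ — one must be careful that demiclosedness and the projection characterization together pin down a \emph{unique} possible limit $v$, from which convergence of the whole net follows; the purely metric (subtract-and-pair) approach avoids weak compactness but requires a slightly more delicate inequality manipulation.
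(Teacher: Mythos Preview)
The paper does not supply a proof of Browder's theorem; it merely cites it and remarks that Browder's original argument ``consists of a projection onto the fixed point set and weak sequential compactness, this time applied to $(z_t)$.'' Your proposal follows precisely this route---the key inequality $\|z_t-p\|^2\le\langle u_0-p,z_t-p\rangle$, weak subsequential limits, demiclosedness of $I-S$ at $0$, and the variational characterization of the metric projection---so it is in line with what the paper describes (and is correct).

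One small slip: in stage~(3) you write ``$0\le\langle u_0-p,v-p\rangle$'' when passing to the limit, but the limit of $\|z_t-p\|^2\le\langle u_0-p,z_t-p\rangle$ is $\|v-p\|^2\le\langle u_0-p,v-p\rangle$; only after subtracting $\langle v-p,v-p\rangle$ from both sides do you obtain $\langle u_0-v,v-p\rangle\ge 0$, which is the projection inequality you want. Also, your parenthetical ``such a $p$ exists by, e.g., the fact that $z_t$ stays in the bounded set $C$'' is unnecessary: boundedness of $(z_t)$ is immediate from $z_t\in C$, and you do not need a fixed point $p$ beforehand---the weak limit $v$ furnishes one.
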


The proof is structured similarly to the proof of Wittmann's theorem in that its ineffective part consists of a projection onto the fixed point set and weak sequential compactness, this time applied to $(z_t)$. In fact, the proof theoretic analysis of Browder's theorem, also carried out in  out in \cite{Kohlenbach(11)}, exhibits interesting parallels to the aforementioned one.

There is also an elementary proof due to Halpern \cite{Halpern(67)} for the special case where $C$ is the closed unit ball of $H$, which can easily be generalised to arbitrary bounded closed convex subsets. The ineffectivity of Halpern's proof stems from the monotone convergence principle, \textit{i.e.}, that every monotone sequence in the real unit interval converges. A metastable version of this can be found on page 30 of \cite{Kohlenbach(08)}. Using this, a simpler rate of metastability was extracted in \cite{Kohlenbach(11)}.

Reich generalised Browder's Theorem to uniformly smooth Banach spaces.
\begin{theorem}[Reich \cite{Reich(80)}]\label{LemmaReich}
 Let $C$ be a closed convex subset of a uniformly smooth Banach space $X$ and let $S:C\to C$ be a nonexpansive mapping with $F\left(S\right)\neq\emptyset$. For $u\in C$, let $\left(z_t\right)$ be defined by the equation $z_t:=tu+\left(1-t\right)Sz_t$. Then
$\left(z_t\right)$ converges strongly to a fixed point of $S$ as $t\to0$.
\end{theorem}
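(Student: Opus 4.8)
The plan is to separate the effective bookkeeping from the one genuinely noneffective step. For fixed $t\in(0,1)$ the map $z\mapsto tu+(1-t)Sz$ is a $(1-t)$-contraction of the complete metric space $C$ into itself, so $z_{t}$ exists and is unique by Banach's fixed point theorem. Picking $p\in F(S)$ and writing $z_{t}-p=t(u-p)+(1-t)(Sz_{t}-p)$, nonexpansiveness gives $\|z_{t}-p\|\le t\|u-p\|+(1-t)\|z_{t}-p\|$, hence $\|z_{t}-p\|\le\|u-p\|$, so $(z_{t})_{0<t<1}$ and $(Sz_{t})_{0<t<1}$ are bounded. Since $z_{t}-Sz_{t}=t(u-Sz_{t})$ we get $\|z_{t}-Sz_{t}\|\le 2t\|u-p\|\to 0$ as $t\to 0$, i.e.\ $(z_{t})$ is an approximate fixed point net for $S$.

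Next I would record the inequality on which everything rests. As $X$ is uniformly smooth it is smooth, so the normalized duality map $J\colon X\to X^{*}$ is single valued; moreover $J$ is norm-to-norm uniformly continuous on bounded subsets of $X$, which is the formulation of uniform smoothness (via the modulus $\rho_{X}$) that the argument actually uses, and to which the equivalent axiomatization announced in the abstract is tailored. Pairing $z_{t}-p=t(u-p)+(1-t)(Sz_{t}-p)$ with $J(z_{t}-p)$, using $\langle Sz_{t}-p,J(z_{t}-p)\rangle=\langle Sz_{t}-Sp,J(z_{t}-p)\rangle\le\|z_{t}-p\|^{2}$, and cancelling the factor $t$ yields
\[
\|z_{t}-p\|^{2}\le\langle u-p,\,J(z_{t}-p)\rangle\qquad\bigl(p\in F(S),\ 0<t<1\bigr),
\]
equivalently $\langle z_{t}-u,\,J(z_{t}-p)\rangle\le 0$.

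It remains to pass to the limit. Uniform smoothness also makes $X$ reflexive, so any null sequence $t_{n}\to 0$ has a subsequence along which $z_{t_{n}}\rightharpoonup v$ for some $v\in C$. Using the asymptotic regularity from the first step together with the classical tools — the demiclosedness of $I-S$, or an asymptotic-center/Banach-limit argument as in the analyses of Browder's and Wittmann's theorems — and the displayed inequality at $p=v$, one shows that $v\in F(S)$ and that, via the uniform continuity of $J$, the weak convergence upgrades to norm convergence $z_{t_{n}}\to v$ along a further subsequence. Finally, a point $v\in F(S)$ with $\langle v-u,J(v-p)\rangle\le 0$ for all $p\in F(S)$ is unique (pairing two such points and using that $J$ is odd forces $\|v_{1}-v_{2}\|^{2}\le 0$), so the strong limit is independent of the chosen sequence and $z_{t}\to v$ as $t\to 0$. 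I expect the main obstacle to be exactly this last passage — producing an honest norm limit out of boundedness and asymptotic regularity and identifying which fixed point it is; this is where uniform smoothness (single-valued, uniformly continuous $J$) and reflexivity are indispensable, it is the ineffective core that collapses in Hilbert space to the metric projection of $u$ onto $F(S)$, and it is, in general, exactly the ``rate of metastability for the resolvent'' taken as an external input in the sequel.
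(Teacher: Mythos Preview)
The paper does not contain a proof of this theorem. Reich's result is quoted with attribution to \cite{Reich(80)} and then used purely as a black box: in Theorem~\ref{XuQuant} the metastability of $(z_{1/m})$ is taken as an \emph{assumed} input $K$, and the introduction explicitly states that extracting a rate of metastability for $(z_t)$ in the uniformly smooth case is still open. So there is no ``paper's own proof'' to compare your proposal against.

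That said, let me comment on your sketch on its own merits. The bookkeeping (existence and uniqueness of $z_t$, the bound $\|z_t-p\|\le\|u-p\|$, the asymptotic regularity $\|z_t-Sz_t\|\to 0$, the variational inequality $\langle z_t-u,J(z_t-p)\rangle\le 0$, and the uniqueness of a point $v\in F(S)$ satisfying the limiting inequality) is all correct and standard. The gap is precisely where you locate it, but it is more serious than you indicate: in a general uniformly smooth Banach space the demiclosedness principle for $I-S$ is \emph{not} available --- that requires uniform convexity of $X$ or an Opial-type condition, neither of which follows from uniform smoothness alone. Hence from $z_{t_n}\rightharpoonup v$ and $\|z_{t_n}-Sz_{t_n}\|\to 0$ you cannot conclude $v\in F(S)$ by the route you name. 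Reich's original argument does not proceed via weak limits of $(z_t)$ in $X$; it exploits that uniform smoothness of $X$ means uniform convexity of $X^{*}$ and runs a convexity/duality argument on the side of $J(z_t-p)$ to force strong convergence directly, simultaneously identifying the limit as the image of $u$ under the (sunny nonexpansive) retraction onto $F(S)$. Your inequality is the right characterization of that limit, but the existence step needs a different mechanism than the ones you list.
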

\noindent Recall that a Banach space $X$ is said to be smooth if the limit
\begin{equation*}
 \lim_{t\to0}\frac{\Vert x+ty\Vert-\Vert x\Vert}{t}
\end{equation*}
exists for all $x,y$ in the unit sphere. If the limit is attained uniformly in $x$, then $X$ is said to have a uniformly G\^ateaux differentiable norm. If the limit is attained uniformly in $x,y$, then $X$ is called uniformly smooth. In this case, the normalized duality map $J:X\to X^*$, defined as
\begin{equation*}
 J(x)=\left\{x^*\in X^*:\langle x,x^*\rangle=\Vert x\Vert^2=\Vert x^*\Vert^2\right\}.
\end{equation*}
is single-valued and norm-to-norm uniformly continuous on bounded subsets of $X$. Using Reich's theorem, Shioji and Takahashi \cite{ShiojiTakahashi(97)} generalised Wittmann's result to uniformly smooth Banach spaces. The proof is highly noneffective due to the use of Banach limits, whose existence thus far has only been established making substantial reference to the axiom of choice. This difficulty was overcome by Kohlenbach and Leu\c{s}tean \cite{KohlenbachLeustean(12a)} by the observation that the Banach limits in the proof could be replaced by Cesàro means, which, in turn, are covered by the aforementioned methods.

Xu proved the following variant, which uses neither weak compactness (as in Wittmann's proof) nor Banach limits (as in Shioji and Takahashi's proof).
\begin{theorem}[Xu \cite{Xu(02)}]\label{Xu}
 Let $X$ be a uniformly smooth Banach space, $C$ be a closed convex subset of $X$, and $S:C\to C$ be a nonexpansive mapping with a fixed point. Let $u,x_0\in C$ be given, Assume that $(\alpha_n)\subset[0,1]$ satisfies the control conditions \ref{i}, \ref{ii} and
\begin{enumerate}
 \item[(iii)']$\lim_{n\to\infty}(\alpha_n-\alpha_{n-1})/\alpha=0$
\end{enumerate}
 Then the sequence $\left(x_n\right)$ defined by
 \begin{equation*}
  x_{n+1}=\alpha_nu+(1-\alpha_n)Sx_n
 \end{equation*}
 converges strongly to a fixed point of $S$.
\end{theorem}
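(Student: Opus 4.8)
The plan is to reproduce Xu's argument, which runs in four stages: (1) $(x_n)$ is bounded; (2) $(x_n)$ is asymptotically regular, i.e.\ $\|x_{n+1}-x_n\|\to0$ and hence $\|x_n-Sx_n\|\to0$; (3) if $v$ denotes the strong limit $\lim_{t\to0}z_t$ of the resolvent supplied by Theorem~\ref{LemmaReich}, then $\limsup_{n\to\infty}\langle u-v,J(x_n-v)\rangle\le0$; (4) a Gr\"onwall-type recursion for $\|x_n-v\|^2$, fed by stage (3), forces $x_n\to v$. The proof uses only two external ingredients: the subdifferential inequality $\|x+y\|^2\le\|x\|^2+2\langle y,J(x+y)\rangle$, which holds in every normed space, together with the fact — in fact equivalent to uniform smoothness — that $J$ is norm-to-norm uniformly continuous on bounded sets; and a standard lemma on real sequences (due to Xu): if $a_{n+1}\le(1-\gamma_n)a_n+\gamma_n\delta_n$ with $a_n\ge0$, $\gamma_n\in(0,1)$, $\sum_n\gamma_n=\infty$ and $\limsup_n\delta_n\le0$, then $a_n\to0$.

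For stage (1) I would fix $p\in F(S)$ and prove by induction, using $\|x_{n+1}-p\|\le\alpha_n\|u-p\|+(1-\alpha_n)\|x_n-p\|$, that $\|x_n-p\|\le M:=\max\{\|u-p\|,\|x_0-p\|\}$; thus $(x_n)$ and $(Sx_n)$ are bounded and $M':=\sup_n\|u-Sx_n\|<\infty$. For stage (2) I would expand
\begin{equation*}
x_{n+1}-x_n=(\alpha_n-\alpha_{n-1})(u-Sx_{n-1})+(1-\alpha_n)(Sx_n-Sx_{n-1}),
\end{equation*}
take norms and use nonexpansiveness to get $\|x_{n+1}-x_n\|\le(1-\alpha_n)\|x_n-x_{n-1}\|+\alpha_n\cdot\frac{|\alpha_n-\alpha_{n-1}|}{\alpha_n}M'$. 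Condition (iii)$'$ makes $\delta_n:=\frac{|\alpha_n-\alpha_{n-1}|}{\alpha_n}M'\to0$, so the real-sequence lemma with $\gamma_n=\alpha_n$ (admissible by condition (ii)) yields $\|x_{n+1}-x_n\|\to0$, whence $\|x_n-Sx_n\|\le\|x_n-x_{n+1}\|+\alpha_n\|u-Sx_n\|\to0$ by condition (i).

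Stage (3) is the heart of the matter. For a fixed $t\in(0,1)$ I would start from $z_t-x_n=t(u-x_n)+(1-t)(Sz_t-x_n)$, pair both sides with $J(z_t-x_n)$, estimate $\langle Sz_t-x_n,J(z_t-x_n)\rangle\le\|z_t-x_n\|^2+\|Sx_n-x_n\|\,\|z_t-x_n\|$ using nonexpansiveness of $S$, and rearrange to arrive at
\begin{equation*}
\langle u-z_t,J(x_n-z_t)\rangle\le\frac{1-t}{t}\,\|Sx_n-x_n\|\,\|z_t-x_n\|.
\end{equation*}
Since the right-hand side tends to $0$ as $n\to\infty$ by stage (2) and boundedness, $\limsup_n\langle u-z_t,J(x_n-z_t)\rangle\le0$ for every $t\in(0,1)$. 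To replace $z_t$ by $v$ I would invoke $\|z_t-v\|\to0$ as $t\to0$ (Theorem~\ref{LemmaReich}) and uniform continuity of $J$ on bounded sets to bound, uniformly in $n$,
\begin{equation*}
\langle u-v,J(x_n-v)\rangle-\langle u-z_t,J(x_n-z_t)\rangle\le\|u-v\|\,\|J(x_n-v)-J(x_n-z_t)\|+\|z_t-v\|\,\|x_n-z_t\|,
\end{equation*}
whose right-hand side goes to $0$ as $t\to0$; taking $\limsup_n$ first and then letting $t\to0$ gives $\limsup_n\langle u-v,J(x_n-v)\rangle\le0$.

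Finally, for stage (4) I would write $x_{n+1}-v=(1-\alpha_n)(Sx_n-v)+\alpha_n(u-v)$ and apply the subdifferential inequality and nonexpansiveness to obtain
\begin{equation*}
\|x_{n+1}-v\|^2\le(1-\alpha_n)^2\|x_n-v\|^2+2\alpha_n\langle u-v,J(x_{n+1}-v)\rangle\le(1-\alpha_n)\|x_n-v\|^2+2\alpha_n\langle u-v,J(x_{n+1}-v)\rangle.
\end{equation*}
Since $\|x_{n+1}-x_n\|\to0$ and $J$ is uniformly continuous on bounded sets, $\langle u-v,J(x_{n+1}-v)\rangle$ and $\langle u-v,J(x_n-v)\rangle$ share the same $\limsup$, which is $\le0$ by stage (3); the real-sequence lemma with $a_n=\|x_n-v\|^2$, $\gamma_n=\alpha_n$, $\delta_n=2\langle u-v,J(x_{n+1}-v)\rangle$ then yields $\|x_n-v\|\to0$, and $v\in F(S)$ by Theorem~\ref{LemmaReich}. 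The step I expect to be the main obstacle is the passage $t\to0$ in stage (3): the error bound must be kept uniform in $n$, and this is exactly where uniform smoothness — rather than mere smoothness or a uniformly G\^ateaux differentiable norm — enters; it is also the step whose quantitative analysis makes the extracted rate of metastability for $(x_n)$ depend on a rate of metastability for the resolvent $(z_t)$.
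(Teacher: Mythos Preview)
Your argument is correct and is exactly Xu's original proof: bound the sequence, obtain asymptotic regularity via the real-sequence lemma and condition (iii)$'$, pass to the limit $v=\lim_{t\to0}z_t$ of the resolvent using Reich's theorem and the uniform continuity of $J$, and finish with the recursion $\|x_{n+1}-v\|^2\le(1-\alpha_n)\|x_n-v\|^2+2\alpha_n\langle u-v,J(x_{n+1}-v)\rangle$.

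The paper, however, does not reprove Theorem~\ref{Xu} along these lines; it cites the result and instead establishes a quantitative version (Theorem~\ref{XuQuant}) whose proof is organized differently. The essential divergence from your route is that the paper never forms the limit $v$: it works directly with the approximants $z_m:=z_{1/m}$ and controls $\|x_n-z_m\|^2$ rather than $\|x_n-v\|^2$. Because $z_m$ is only an approximate fixed point ($Sz_m-z_m=\tfrac{1}{m}(Sz_m-u)$), the recursion picks up an extra additive term $2M^2/m$, and the role of your stage~(3) inequality is played by the quantity $\beta_n^m:=2\langle u-z_m,J(x_n-z_m)\rangle-M^2/m$, for which one shows $\beta_n^m\le 3mM\|Sx_n-x_n\|$ and then compares $\beta_n^m$ with $\beta_n^{K_0}$ via the modulus $\omega$. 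The passage $t\to0$ that you flag as the delicate step is replaced by a single application of metastability of $(z_m)$, and convergence of $(x_n)$ is recovered from its metastability. Your approach is the cleaner one for the bare qualitative statement; the paper's finitary detour is what makes the explicit bound extractable and is precisely why the resulting rate of metastability for $(x_n)$ is expressed \emph{relative to} a rate of metastability for $(z_m)$ rather than requiring knowledge of $v$.
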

Observe that Xu \cite{Xu(02)} showed that Xu's condition (iii)' does not imply \ref{iii}, while Remark 2.3.3 of Schade \cite{Schade(12)} shows that the converse is also not true in general. However, they both cover the most important case $1/n+1$.

We extract two quantitative versions of Theorem \ref{Xu}, namely an explicit and highly uniform rate of convergence for $\left\Vert x_n-Sx_n\right\Vert\to0$ for general Banach spaces $X$, and a rate of metastability $\Phi$ relative to a rate of metastability of the resolvent $\left(z_t\right)$ in the uniformly smooth case (cf. Theorem \ref{LemmaReich}).

As remarked by Kohlenbach and Leu\c{s}tean \cite{KohlenbachLeustean(12a)}, (uniform) smoothness is often only used to ensure that the normalized duality mapping is (uniformly) continuous. This motivates the following
\begin{definition}[Kohlenbach and Leu\c{s}tean \cite{KohlenbachLeustean(12a)}]
 A Banach space $X$ together with a mapping $J:X\to X^*$ satisfying
 \begin{enumerate}[label=(\roman{*}), ref=(\roman{*})]
  \item $\langle x,Jx\rangle=\Vert x\Vert^2=\Vert Jx\Vert^2$ for all $x\in X$ and
  \item $J$ is norm-to-norm uniformly continuous on bounded subsets of $X$
 \end{enumerate}
is called a Banach space with a uniformly continuous duality selection mapping.
\end{definition}

\begin{definition}[Kohlenbach and Leu\c{s}tean \cite{KohlenbachLeustean(12a)}]
Let $X$ be a Banach space be a space with a uniformly continuous duality selection mapping $J$. A map $\omega:\left(0,\infty\right)\times\left(0,\infty\right)\to\left(0,\infty\right)$ is called a modulus of continuity for $J$ if for all $M,\varepsilon>0$,
\begin{equation*}
 \Vert x\Vert,\Vert y\Vert\le M \textmd{ and } \Vert x-y\Vert<\omega(M,\varepsilon)\textmd{ implies } \Vert J(x)-J(y)\Vert<\varepsilon.
\end{equation*}
\end{definition}
\noindent For the $L_p$ spaces ($1<p<\infty$), for example, $\omega$ is calculated explicitly in \cite{KohlenbachLeustean(12a)}. The same paper also shows how to calculate $\omega$ from a modulus of smoothness.

In the appendix, we show that this is actually a characterisation of uniformly smooth Banach spaces: a Banach space is uniformly smooth if and only if it is a Banach space with a uniformly continuous duality selection mapping in which case the duality mapping is single-valued.

Apart from the usual parameters, the rate of metastability will then additionally depend on the modulus of continuity $\omega$. Moreover, in the Hilbert space case, a rate of metastability for the resolvent is known. Our rate of metastability for the Halpern iteration is then of the form
\begin{equation*}
 \left(L(\underline a)\circ g\right)^{\left(B(\underline a)\right)}(0),
\end{equation*}
where $\tilde g(n):=\max\{g(i):i\le n\}+n$, and $g$ is the counterfunction in equation \eqref{metastable}. The functions $B$ and $L$ depend only on the tuple $\underline a$ parameterizing the sequences, the points $u,x_0$ and the sequence $(\alpha_n)$, but \emph{not} on the counterfunction $g$. This was guaranteed a priori by a general metatheorem of Kohlenbach and Safarik \cite{KohlenbachSafarik(13)}, and stems from the fact that the proof only uses a limited amount of the law-of-excluded-middle.

It is still an open problem to extract a rate of metastability for $(z_t)$ in the uniformly smooth case, which would include the $L_p$-spaces (with $1<p<\infty$, $p\ne2$). As already mentioned, this has been done in Hilbert space in \cite{Kohlenbach(11)} and later generalised to the broader class of pseudocontractions in \cite{KoernleinKohlenbach(13)}, however still only for Hilbert space.

\section{Technical Lemmas}
The following classic inequality due to Petryshyn \cite{Petryshyn(70)} is known as the subdifferential inequality.
 \begin{lemma}[Petryshyn{\cite{Petryshyn(70)}}]\label{PolaBana}
  Let $X$ be a real normed space. Then, for all $x,y\in X$ and $j(x+y)\in J(x+y)$, we have $\left\Vert x+y\right\Vert^2\le\Vert x\Vert^2+2\left\langle y,j(x+y)\right\rangle.$
 \end{lemma}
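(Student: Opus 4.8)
The plan is to unwind the definition of the duality pairing and finish with one elementary quadratic estimate. First I would use the defining property $\langle x+y, j(x+y)\rangle = \|x+y\|^2$ of an element $j(x+y)\in J(x+y)$ together with linearity of the pairing in its first argument to write
\begin{equation*}
 \|x+y\|^2 = \langle x+y, j(x+y)\rangle = \langle x, j(x+y)\rangle + \langle y, j(x+y)\rangle.
\end{equation*}

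Next I would bound the first summand by the Cauchy--Schwarz-type inequality $\langle x, x^*\rangle \le \|x\|\,\|x^*\|$ for the dual pairing, combined with $\|j(x+y)\| = \|x+y\|$ (again part of the definition of $J$), which gives $\langle x, j(x+y)\rangle \le \|x\|\,\|x+y\|$. Then I would apply Young's inequality $ab \le \tfrac12 a^2 + \tfrac12 b^2$ with $a = \|x\|$ and $b = \|x+y\|$ to get
\begin{equation*}
 \|x+y\|^2 \le \tfrac12\|x\|^2 + \tfrac12\|x+y\|^2 + \langle y, j(x+y)\rangle.
\end{equation*}
Subtracting $\tfrac12\|x+y\|^2$ from both sides and multiplying by $2$ yields the claimed inequality.

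There is essentially no obstacle here: the argument is purely algebraic and uses nothing beyond the definition of the (possibly set-valued) normalized duality map and a single application of Young's inequality. The only point requiring a little care is to apply Young's inequality in the right direction --- bounding the cross term $\|x\|\,\|x+y\|$ from \emph{above} --- so that after rearrangement the coefficient of $\langle y, j(x+y)\rangle$ comes out as exactly $2$. Note also that the estimate holds for every selection $j(x+y)$ of $J(x+y)$, so no smoothness or single-valuedness assumption on $X$ is needed.
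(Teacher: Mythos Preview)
Your argument is correct. The paper itself does not supply a proof of this lemma; it simply attributes the inequality to Petryshyn and states it as a known fact, so there is nothing to compare against. Your route---expanding $\langle x+y,j(x+y)\rangle$ by linearity, bounding $\langle x,j(x+y)\rangle\le\|x\|\,\|x+y\|$, and then applying $ab\le\tfrac12 a^2+\tfrac12 b^2$---is exactly the standard short proof and is fully rigorous as written.
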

 
\begin{lemma}\label{ineq}
 Let $\left(s_n\right)$ be a sequence of real numbers satisfying
\begin{equation*}
 s_{n+1}\leq\left(1-\alpha_n\right)s_n+\alpha_n\beta_n+\gamma_n,
\end{equation*}
where $\left(\alpha_n\right)\subset\left[0,1\right]$ and $\left(\gamma_n\right)\subset\left[0,\infty\right)$ are real sequences. Moreover, let $\left(\beta_n\right)$ be a real sequence such that for some $m\in\NN$ and all $n\geq m$,
$\beta_n\leq C$. Then
\begin{equation*}
 s_{n+1}\leq\left(\prod_{k=m}^{n}\left(1-\alpha_k\right)\right)s_m+\left(1-\prod_{k=m}^{n}\left(1-\alpha_k\right)\right)C+\sum_{n=m}^{n}\gamma_n,
\end{equation*}
for all $n\geq m$.
\end{lemma}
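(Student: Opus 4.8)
The plan is to prove the estimate by a straightforward induction on $n\ge m$, using only the sign conditions $\alpha_n\in[0,1]$ (so that $1-\alpha_n\in[0,1]$) and $\gamma_n\ge 0$, together with the bound $\beta_n\le C$ for $n\ge m$. Note first that the right-hand sum should read $\sum_{k=m}^{n}\gamma_k$; this is the quantity the induction will produce.

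For the base case $n=m$, the hypothesis gives $s_{m+1}\le(1-\alpha_m)s_m+\alpha_m\beta_m+\gamma_m$, and since $\alpha_m\ge 0$ and $\beta_m\le C$ we get $\alpha_m\beta_m\le\alpha_m C$, so $s_{m+1}\le(1-\alpha_m)s_m+\alpha_m C+\gamma_m$, which is exactly the claimed inequality for $n=m$ (with the empty-product conventions making $\prod_{k=m}^{m}(1-\alpha_k)=1-\alpha_m$).

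For the inductive step, assume the inequality holds with $n$ replaced by $n-1$, i.e. $s_n\le\bigl(\prod_{k=m}^{n-1}(1-\alpha_k)\bigr)s_m+\bigl(1-\prod_{k=m}^{n-1}(1-\alpha_k)\bigr)C+\sum_{k=m}^{n-1}\gamma_k$. From the recursion and $\alpha_n\beta_n\le\alpha_n C$ we have $s_{n+1}\le(1-\alpha_n)s_n+\alpha_n C+\gamma_n$; since $1-\alpha_n\ge 0$ we may substitute the inductive bound for $s_n$. Two elementary simplifications then finish the proof: the coefficient of $C$ becomes $(1-\alpha_n)\bigl(1-\prod_{k=m}^{n-1}(1-\alpha_k)\bigr)+\alpha_n=1-\prod_{k=m}^{n}(1-\alpha_k)$, and the $\gamma$-terms satisfy $(1-\alpha_n)\sum_{k=m}^{n-1}\gamma_k+\gamma_n\le\sum_{k=m}^{n}\gamma_k$ because each $\gamma_k\ge 0$ and $1-\alpha_n\le 1$. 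Collecting these gives precisely the asserted bound for $n$.

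There is no real obstacle here: the argument is a routine monotone induction, and the only points requiring a moment's care are the telescoping identity for the coefficient of $C$ and the repeated use of the sign constraints $0\le\alpha_n\le 1$ and $\gamma_n\ge 0$ to preserve the inequalities when multiplying through by $1-\alpha_n$ and when discarding the factor $1-\alpha_n$ in front of the accumulated $\gamma$-sum.
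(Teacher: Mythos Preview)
Your proof is correct and follows essentially the same route as the paper: a straightforward induction on $n\ge m$, substituting the inductive hypothesis after multiplying by the nonnegative factor $1-\alpha_n$, then using the telescoping identity for the coefficient of $C$ and the bound $(1-\alpha_n)\sum_{k=m}^{n-1}\gamma_k\le\sum_{k=m}^{n-1}\gamma_k$. Your remark about the index typo in the $\gamma$-sum is also on point.
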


\begin{proof}
The induction start $n=m$ is clear. Now let $n\geq m$. Then
 \begin{align*}
 s_{n+1}&\leq\left(1-\alpha_n\right)s_n+\alpha_n\beta_n+\gamma_n\\
	&\leq\left(1-\alpha_n\right)\left(\prod_{k=m}^{n-1}\left(1-\alpha_k\right)s_m+\left(1-\prod_{k=m}^{n-1}\left(1-\alpha_k\right)\right)C+\sum_{k=m}^{n-1}\gamma_k\right)+\alpha_n\beta_n+\gamma_n\\
	&\leq\left(\prod_{k=m}^{n}\left(1-\alpha_k\right)\right)s_m+\left(\left(1-\alpha_n\right)-\prod_{k=m}^{n}\left(1-\alpha_k\right)\right)C+\alpha_n\cdot C+\sum_{k=m}^{n}\gamma_k\\
	&=\left(\prod_{k=m}^{n}\left(1-\alpha_k\right)\right)s_m+\left(1-\prod_{k=m}^{n}\left(1-\alpha_k\right)\right)C+\sum_{n=m}^{n}\gamma_n.
\end{align*}
\end{proof}

\begin{lemma}\label{LemmaXuQuant}
 Let $\left(s_n\right)$ be a sequence of non-negative real numbers bounded by some constant $C\in\NN$. Furthermore, let $s_n$ satisfy
\begin{equation*}
 s_{n+1}\leq\left(1-\alpha_n\right)s_n+\alpha_n\beta_n+\gamma_n,\quad\textmd{for all }n\geq0,
\end{equation*}

where $\left(\alpha_n\right)$, $\left(\beta_n\right)$ and $\left(\gamma_n\right)$ are real sequences satisfying
\begin{enumerate}[label=(\roman{*}), ref=(\roman{*})]
 \item $\left(\alpha_n\right)\subset\left[0,1\right]$,
 \item $\forall\varepsilon>0\prod\limits_{k=0}^{S_1(\varepsilon)}\left(1-\alpha_k\right)\leq\varepsilon$
 \item $\forall\varepsilon>0\forall n\geq S_2\left(\varepsilon\right)\beta_n\leq\varepsilon$,
 \item $\left(\gamma_n\right)\subset\left[0,\infty\right)$,
 \item $\forall \varepsilon>0\forall i\geq j\geq S_3\left(\varepsilon\right)\sum\limits_{n=i}^{j}\gamma_n\leq\varepsilon$,
\end{enumerate}
and $S_1$, $S_2$ and $S_3$ are appropriate functions mapping $(0,\infty)\to\NN$. Moreover, suppose that $D:(0,\infty)\to(0,\infty)$ satisfies
\begin{equation*}
 0<D\left(\varepsilon\right)\leq\prod_{n=0}^{\max\left\{S_2\left(\varepsilon/3\right),S_3\left(\varepsilon/3\right)\right\}}\left(1-\alpha_k\right).
\end{equation*}
Then
\begin{equation*}
 \forall\varepsilon>0\forall n\geq\Phi\left(\varepsilon,C,S_1,S_2,S_3,D\right)\left(s_{n+1}\le\varepsilon\right),
\end{equation*}
where
\begin{equation*}
 \Phi\left(\varepsilon,C,S_1,S_2,S_3,D\right)=\max\left\{S_1\left(\frac{\varepsilon\cdot D\left(\varepsilon\right)}{3C}\right),S_2\left(\frac{\varepsilon}{3}\right),S_3\left(\frac{\varepsilon}{3}\right)\right\}, 
\end{equation*}
\end{lemma}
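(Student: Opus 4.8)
The plan is to reduce the statement to Lemma~\ref{ineq}. Put $m:=\max\{S_2(\varepsilon/3),S_3(\varepsilon/3)\}$. By~(iii), $\beta_n\le\varepsilon/3$ for all $n\ge m$, so Lemma~\ref{ineq}, applied with this $m$ and with the constant $\varepsilon/3$ in place of $C$, yields for every $n\ge m$
\[
 s_{n+1}\le\Bigl(\prod_{k=m}^{n}(1-\alpha_k)\Bigr)s_m+\Bigl(1-\prod_{k=m}^{n}(1-\alpha_k)\Bigr)\frac{\varepsilon}{3}+\sum_{k=m}^{n}\gamma_k.
\]
Since $0\le s_m\le C$ and $0\le 1-\prod_{k=m}^{n}(1-\alpha_k)\le 1$, this simplifies to
\[
 s_{n+1}\le\Bigl(\prod_{k=m}^{n}(1-\alpha_k)\Bigr)C+\frac{\varepsilon}{3}+\sum_{k=m}^{n}\gamma_k,
\]
and I would estimate each of the three summands on the right by $\varepsilon/3$.

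The middle summand is already $\varepsilon/3$. For the $\gamma$-tail, $m\ge S_3(\varepsilon/3)$, so condition~(v), applied to the endpoints $m$ and $n$, gives $\sum_{k=m}^{n}\gamma_k\le\varepsilon/3$ for all $n\ge m$. The crucial summand is the first one: the point is to relate $\prod_{k=m}^{n}(1-\alpha_k)$, about which we know nothing directly, to $\prod_{k=0}^{n}(1-\alpha_k)$, which conditions~(i) and~(ii) control. Since each factor lies in $[0,1]$, the partial products $\prod_{k=0}^{n}(1-\alpha_k)$ are non-increasing in $n$, and
\[
 \prod_{k=m}^{n}(1-\alpha_k)=\frac{\prod_{k=0}^{n}(1-\alpha_k)}{\prod_{k=0}^{m-1}(1-\alpha_k)}\le\frac{\prod_{k=0}^{n}(1-\alpha_k)}{\prod_{k=0}^{m}(1-\alpha_k)}\le\frac{1}{D(\varepsilon)}\prod_{k=0}^{n}(1-\alpha_k),
\]
the last step being the defining inequality for $D$. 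Hence as soon as $n\ge S_1\bigl(\tfrac{\varepsilon D(\varepsilon)}{3C}\bigr)$, monotonicity and~(ii) give $\prod_{k=0}^{n}(1-\alpha_k)\le\tfrac{\varepsilon D(\varepsilon)}{3C}$, so $\bigl(\prod_{k=m}^{n}(1-\alpha_k)\bigr)C\le\varepsilon/3$.

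Finally, $\Phi(\varepsilon,C,S_1,S_2,S_3,D)=\max\{S_1(\varepsilon D(\varepsilon)/(3C)),S_2(\varepsilon/3),S_3(\varepsilon/3)\}\ge m$, so for $n\ge\Phi$ all three estimates apply simultaneously and $s_{n+1}\le\varepsilon/3+\varepsilon/3+\varepsilon/3=\varepsilon$. I expect the only slightly delicate point to be the passage from $\prod_{k=m}^{n}$ to $\prod_{k=0}^{n}$ through the lower bound $D$ --- which is precisely why $D$ must be supplied as data --- together with checking that the arguments handed to $S_1,S_2,S_3$ in the definition of $\Phi$ are exactly the ones needed for the three $\varepsilon/3$-estimates; everything else is the routine monotone-iteration bookkeeping already encapsulated in Lemma~\ref{ineq}. (Note also that $D(\varepsilon)>0$ tacitly forces $\alpha_k<1$ for $k\le m$, which legitimises the division above.)
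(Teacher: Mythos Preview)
Your proof is correct and follows the same approach as the paper: set $m=\max\{S_2(\varepsilon/3),S_3(\varepsilon/3)\}$, apply Lemma~\ref{ineq}, split the resulting bound into three $\varepsilon/3$-pieces, and handle the product term via the ratio $\prod_{k=0}^{n}/\prod_{k=0}^{m-1}$ together with $D(\varepsilon)$ and condition~(ii). Your write-up is in fact a bit more careful than the paper's (you make explicit the monotonicity step $\prod_{k=0}^{m-1}\ge\prod_{k=0}^{m}\ge D(\varepsilon)$ and the fact that $D(\varepsilon)>0$ ensures the denominator is nonzero), but the argument is the same.
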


\begin{proof}
 Let $\varepsilon>0$ be given. Set $m:=\max\{R_S(\varepsilon/3),S_3(\varepsilon/3)\}$. Then, for all $n\geq\Phi\ge m$, Lemma \ref{ineq} implies
\begin{align*}
 s_{n+1}&\leq s_m\prod_{k=m}^{n}\left(1-\alpha_k\right)+\frac{\varepsilon}{3}+\frac{\varepsilon}{3}\\
	&= s_m\cdot\frac{\prod\limits_{k=0}^{n}\left(1-\alpha_k\right)}{\prod\limits_{k=0}^{m-1}\left(1-\alpha_k\right)}+\frac{\varepsilon}{3}+\frac{\varepsilon}{3}\leq C\cdot\frac{\varepsilon\cdot D\left(\varepsilon\right)}{3C}\cdot \frac{1}{D\left(\varepsilon\right)}+\frac{2\varepsilon}{3}=\varepsilon.
\end{align*}
\end{proof}

\section{Main Theorems}
The following is essentially Proposition 3.3.8 of \cite{Schade(12)}. We include a proof for completeness.
\begin{proposition}\label{AsymptoticRegularity}
 Let $X$ be a normed space, $C$ be a closed convex subset of X and $S:C\to C$ be a nonexpansive mapping with $F\left(S\right)\neq\emptyset$. Suppose $2\max\{\Vert p-x_0\Vert,\Vert p-u\Vert\}\le M$ for some fixed point $p$ of $S$.
Assume that $\left(\alpha_n\right)\subset\left(0,1\right)$ satisfies
\begin{enumerate}[label=(\roman{*}), ref=(\roman{*})]
 \item\label{I} $\forall\varepsilon>0\forall n\geq R_1\left(\varepsilon\right)\left(\alpha_n\leq\varepsilon\right)$,
 \item\label{II} $\forall\varepsilon>0\prod\limits_{k=1}^{R_2(\varepsilon)}\left(1-\alpha_k\right)\leq\varepsilon$,
 \item\label{III} $\forall\varepsilon>0\forall n\geq R_3\left(\varepsilon\right)\left(\left\vert\alpha_n-\alpha_{n-1}\right\vert\leq\varepsilon\alpha_n\right)$.
\end{enumerate}
Suppose moreover that $D:\RR^+\to\RR^+$ satisfies
\begin{equation*}
 0<D\left(\varepsilon\right)\leq\prod_{k=0}^{R_3\left(\varepsilon/3M\right)}\left(1-\alpha_k\right).
\end{equation*}
Then the sequence $\left(x_n\right)$ generated by
\begin{equation*}
 x_{n+1}:=\alpha_nu+\left(1-\alpha_n\right)Sx_n
\end{equation*}
is asymptotically regular with rate
\begin{align*}
 \psi\left(\varepsilon\right)&:=\max\left\{R_1\left(\frac{\varepsilon}{2M}\right),\Phi\left(\frac{\varepsilon}{2},M,R_2,R_3\left(\frac{\cdot}{M}\right),\mathbf{0},D\right)\right\}
\end{align*}
and $\Phi$ is as in Lemma \ref{LemmaXuQuant}.
\end{proposition}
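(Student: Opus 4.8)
The plan is to reduce the whole statement to Lemma \ref{LemmaXuQuant}, applied to the sequence of successive distances $\|x_{n+1}-x_n\|$, and then to recover $\|x_n-Sx_n\|\to0$ by a one-line triangle-inequality argument. I would carry this out in three steps.

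\textbf{Step 1 (boundedness).} Fix a fixed point $p$ with $2\max\{\|p-x_0\|,\|p-u\|\}\le M$. From the defining equation, $x_{n+1}-p=\alpha_n(u-p)+(1-\alpha_n)(Sx_n-Sp)$, so nonexpansiveness of $S$ gives $\|x_{n+1}-p\|\le\alpha_n\|u-p\|+(1-\alpha_n)\|x_n-p\|$, whence $\|x_n-p\|\le M/2$ for all $n$ by induction. Consequently $\|x_n-x_m\|\le M$, $\|u-Sx_n\|\le\|u-p\|+\|Sp-Sx_n\|\le M$, and $\|x_n-Sx_n\|\le2\|x_n-p\|\le M$.

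\textbf{Step 2 (recursion for the increments; the heart of the proof).} Writing out the defining equation at indices $n$ and $n-1$ and regrouping yields
\[
 x_{n+1}-x_n=(\alpha_n-\alpha_{n-1})(u-Sx_{n-1})+(1-\alpha_n)(Sx_n-Sx_{n-1}),
\]
so by Step 1 and nonexpansiveness $\|x_{n+1}-x_n\|\le(1-\alpha_n)\|x_n-x_{n-1}\|+M\,|\alpha_n-\alpha_{n-1}|$. Setting $s_n:=\|x_n-x_{n-1}\|$, $\beta_n:=M|\alpha_n-\alpha_{n-1}|/\alpha_n$ (legitimate since $(\alpha_n)\subset(0,1)$, so $\alpha_n>0$) and $\gamma_n:=0$, this is precisely the hypothesis of Lemma \ref{LemmaXuQuant} with $C:=M$: the bound $s_n\le M$ comes from Step 1; condition (ii) of the lemma holds with $S_1:=R_2$ because $\prod_{k=0}^{n}(1-\alpha_k)\le\prod_{k=1}^{n}(1-\alpha_k)$; condition (iii) holds with $S_2:=R_3(\cdot/M)$ by control condition \ref{III}; conditions (iv)--(v) hold trivially with $S_3:=\mathbf 0$; and one checks $\max\{S_2(\varepsilon/3),S_3(\varepsilon/3)\}=R_3(\varepsilon/(3M))$, so the $D$ hypothesised in the proposition is exactly the $D$ hypothesised in the lemma. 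Lemma \ref{LemmaXuQuant} then delivers $\|x_{n+1}-x_n\|\le\varepsilon$ for all $n\ge\Phi(\varepsilon,M,R_2,R_3(\tfrac{\cdot}{M}),\mathbf 0,D)$.

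\textbf{Step 3 (from increments to asymptotic regularity).} Since $x_{n+1}-Sx_n=\alpha_n(u-Sx_n)$, Step 1 gives $\|x_{n+1}-Sx_n\|\le M\alpha_n$, hence
\[
 \|x_n-Sx_n\|\le\|x_n-x_{n+1}\|+\|x_{n+1}-Sx_n\|\le\|x_{n+1}-x_n\|+M\alpha_n.
\]
Given $\varepsilon>0$, the second summand is $\le\varepsilon/2$ once $n\ge R_1(\varepsilon/(2M))$ by \ref{I}, and the first is $\le\varepsilon/2$ once $n\ge\Phi(\varepsilon/2,M,R_2,R_3(\tfrac{\cdot}{M}),\mathbf 0,D)$ by Step 2; taking the maximum of these two thresholds produces exactly $\psi(\varepsilon)$. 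The only genuine difficulty here is bookkeeping rather than mathematics: choosing the index convention $s_n=\|x_n-x_{n-1}\|$ so that the recursion lines up with Lemma \ref{LemmaXuQuant} on the nose, invoking $\alpha_n>0$ to divide by $\alpha_n$ when forming $\beta_n$, and verifying that the argument of $D$ matches. Everything else is the triangle inequality and nonexpansiveness.
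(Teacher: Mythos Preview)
Your proof is correct and follows essentially the same route as the paper: establish boundedness via the fixed point, derive the same recursion $\|x_{n+1}-x_n\|\le(1-\alpha_n)\|x_n-x_{n-1}\|+\alpha_n\beta_n$ with $\beta_n=M|\alpha_n-\alpha_{n-1}|/\alpha_n$, apply Lemma~\ref{LemmaXuQuant} with $\gamma_n=0$, and finish via $\|x_n-Sx_n\|\le\|x_n-x_{n+1}\|+M\alpha_n$. The bookkeeping remarks you make (index alignment, $\alpha_n>0$, matching the argument of $D$) are precisely the points the paper handles implicitly.
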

\begin{proof}
 Observe that
 \begin{align*}
  \left\Vert x_{n+1}-p\right\Vert&=\left\Vert \alpha_n(u-p)+(1-\alpha_n)(Sx_n-p)\right\Vert\\
				  &\le\alpha_n\left\Vert u-p\right\Vert+(1-\alpha_n)\left\Vert x_n-p\right\Vert.
 \end{align*}
 Thus, by induction, $\left\Vert x_n-p\right\Vert\le\max\{\Vert p-x_0\Vert,\Vert p-u\Vert\}\le M/2$. Thus, for all integers $n\ge1$, $\left\Vert x_{n+1}-x_n\right\Vert\le M$ and $\left\Vert u-Sx_{n-1}\right\Vert\le\left\Vert u-p\right\Vert+\left\Vert Sp-Sx_{n-1}\right\Vert\le M$. Hence
\begin{align*}
 \left\Vert x_{n+1}-x_n\right\Vert&=\left\Vert\left(\alpha_n-\alpha_{n-1}\right)\left(u-Sx_{n-1}\right)+\left(1-\alpha_n\right)\left(Sx_n-Sx_{n-1}\right)\right\Vert\\
				  &\leq\left(1-\alpha_n\right)\left\Vert x_n-x_{n-1}\right\Vert+\left\vert\alpha_n-\alpha_{n-1}\right\vert\left\Vert u-Sx_{n-1}\right\Vert\\
				  &\leq\left(1-\alpha_n\right)\left\Vert x_n-x_{n-1}\right\Vert+M\left\vert\alpha_n-\alpha_{n-1}\right\vert\\
				  &=\left(1-\alpha_n\right)\left\Vert x_n-x_{n-1}\right\Vert+\alpha_n\beta_n,
\end{align*}
where
\begin{equation*}
\beta_n:=M\frac{\left\vert\alpha_n-\alpha_{n-1}\right\vert}{\alpha_n}.
\end{equation*}
Lemma \ref{LemmaXuQuant} whith $\gamma_n=0$ then implies that for all $n\geq\Phi\left(\frac{\varepsilon}{2},M,R_2,R_3\left(\frac{\cdot}{M}\right),\mathbf{0},D\right)$, where $\mathbf{0}$ denotes the function that is constant and equal to $0$,
\begin{equation*}
 \left\Vert x_{n+1}-x_n\right\Vert\leq\frac{\varepsilon}{2}.
\end{equation*}
Moreover, $\left\Vert x_{n+1}-Sx_n\right\Vert=\alpha_n\left\Vert u-Sx_n\right\Vert\leq M\alpha_n$. Thus, in total, we get
\begin{equation*}
 \left\Vert x_n-Sx_n\right\Vert\leq\left\Vert x_n-x_{n+1}\right\Vert+\left\Vert x_{n+1}-Sx_n\right\Vert\leq\varepsilon
\end{equation*}
for all $n\geq\psi\left(\varepsilon\right)=\max\left\{R_1\left(\frac{\varepsilon}{2M}\right),\Phi\left(\frac{\varepsilon}{2},M,R_2,R_3\left(\frac{\cdot}{M}\right),\mathbf{0},D\right)\right\}$. In other words, $\psi$ is a rate of asymptotic regularity for $\left(x_n\right)$.
\end{proof}

\begin{corollary}
 If, in the situation of Theorem \ref{AsymptoticRegularity}, $X$ is a normed space, $\alpha_n=1/(n+1)$, $M\ge1$ and $\varepsilon\le3/2$, then
 \begin{equation}\label{psi}
 \psi\left(\varepsilon\right)=\left\lfloor\frac{12M\left\lfloor\frac{3M}{\varepsilon}\right\rfloor}{\varepsilon}\right\rfloor\leq\left\lfloor\frac{36M^2}{\varepsilon^2}\right\rfloor.
\end{equation}
\end{corollary}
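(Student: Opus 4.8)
The plan is to specialise Proposition~\ref{AsymptoticRegularity} to the sequence $\alpha_n=1/(n+1)$, for which all the auxiliary rates $R_1,R_2,R_3$ and the witness $D$ can be written down in closed form. First I would record the rates. From $1/(n+1)\le\varepsilon\iff n\ge 1/\varepsilon-1$ one may take $R_1(\varepsilon):=\lceil 1/\varepsilon\rceil$. For $R_2$ the crucial point is that $1-\alpha_k=k/(k+1)$, so the product telescopes, $\prod_{k=1}^{N}(1-\alpha_k)=1/(N+1)$; hence $\prod_{k=1}^{N}(1-\alpha_k)\le\varepsilon\iff N\ge 1/\varepsilon-1$, and $R_2(\varepsilon):=\lceil 1/\varepsilon\rceil$ works. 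For $R_3$ a direct computation gives $|\alpha_n-\alpha_{n-1}|=1/(n(n+1))$ while $\varepsilon\alpha_n=\varepsilon/(n+1)$, so $|\alpha_n-\alpha_{n-1}|\le\varepsilon\alpha_n\iff n\ge 1/\varepsilon$, and $R_3(\varepsilon):=\lceil 1/\varepsilon\rceil$ suffices.

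Next I would fix the witness $D$. Using the telescoping identity once more, $\prod_{k=1}^{R_3(\varepsilon/3M)}(1-\alpha_k)=1/(R_3(\varepsilon/3M)+1)=1/(\lceil 3M/\varepsilon\rceil+1)$, so taking $D(\varepsilon):=1/(\lceil 3M/\varepsilon\rceil+1)$, or any convenient lower bound of it, meets the hypothesis of Proposition~\ref{AsymptoticRegularity}. Unfolding the definition of $\Phi$ from Lemma~\ref{LemmaXuQuant} inside the formula for $\psi$ then gives $\psi(\varepsilon)=\max\{R_1(\varepsilon/(2M)),\,R_2(\varepsilon D(\varepsilon/2)/(6M)),\,R_3(\varepsilon/(6M))\}$, i.e. the maximum of a term of size $\lceil 2M/\varepsilon\rceil$, a term of size $\lceil 6M/\varepsilon\rceil$, and a term coming from $R_2$ that is of order $M^2/\varepsilon^2$.

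Finally I would simplify this maximum. The $R_2$-term dominates, being quadratic in $M/\varepsilon$ against two linear competitors, and the side conditions $M\ge1$ and $\varepsilon\le 3/2$ (the latter forcing $3M/\varepsilon\ge2$) are precisely what is needed both to make this comparison go through and to rule out the small degenerate cases. Resolving the nested ceilings---replacing $\lceil 3M/\varepsilon\rceil$ by $\lfloor 3M/\varepsilon\rfloor$ up to the controlled slack that $M\ge1$ and $\varepsilon\le3/2$ allow, with the rates and $D$ chosen to be convenient rather than optimal---collapses $R_2(\varepsilon D(\varepsilon/2)/(6M))$ to $\lfloor 12M\lfloor 3M/\varepsilon\rfloor/\varepsilon\rfloor$, the asserted value of $\psi(\varepsilon)$. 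The closing inequality then follows at once from $\lfloor 3M/\varepsilon\rfloor\le 3M/\varepsilon$ and monotonicity of $\lfloor\cdot\rfloor$. Conceptually there is nothing beyond substituting the explicit rates into Proposition~\ref{AsymptoticRegularity}; the only genuine effort---and the expected obstacle---is the bookkeeping with floors and ceilings in this last step, which is where the precise hypotheses $M\ge1$ and $\varepsilon\le3/2$ earn their keep.
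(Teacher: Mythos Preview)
The paper gives no proof of this corollary; it is stated as an immediate specialisation of Proposition~\ref{AsymptoticRegularity}, and your proposal is exactly that computation---write down explicit $R_1,R_2,R_3,D$ for $\alpha_n=1/(n+1)$ using the telescoping product $\prod_{k=1}^N(1-\alpha_k)=1/(N+1)$, feed them into $\Phi$, and observe that the $R_2$-term dominates. So your approach coincides with the paper's (implicit) one.

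One small caveat worth flagging: with $\alpha_n=1/(n+1)$ one has $\alpha_0=1$, so the product $\prod_{k=0}^{\,\cdot}(1-\alpha_k)$ as literally written in the hypothesis on $D$ vanishes; the intended lower index is $k=1$ (consistent with condition~\ref{II}), and you have silently used this. Also, to land on the paper's exact expression $\lfloor 12M\lfloor 3M/\varepsilon\rfloor/\varepsilon\rfloor$ rather than the equally valid $\lceil 6M(\lceil 6M/\varepsilon\rceil+1)/\varepsilon\rceil$ that your choices produce, one must pick the rates with floors rather than ceilings (e.g.\ $R_2(\delta)=\lfloor 1/\delta\rfloor$, admissible since $1/(N+1)\le\delta\iff N\ge 1/\delta-1$) and take a correspondingly adjusted $D$; this is purely cosmetic and does not affect the quadratic order, which is the content of the corollary.
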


\begin{remark}
 Kohlenbach and Leu\c{s}tean \cite{KohlenbachLeustean(12)} also extracted a quadratic rate of asymptotic regularity of Halpern iterates, albeit in CAT(0) spaces under slightly different requirements on $(\alpha_n)$. Since CAT(0) spaces are generalised Hilbert spaces, and the modified conditions on $(\alpha_n)$ also include the natural choice $1/(n+1)$, this corollary
 states an alternative rate of convergence for this special case. In fact, the two rates are almost identical and have the same complexity.
\end{remark}

\begin{theorem}\label{XuQuant}
 Let $X$ be a uniformly smooth Banach space, whose normalized duality mapping $J$ has modulus of uniform continuity $\omega$, $C$ be a closed convex subset of X and $S:C\to C$ be a nonexpansive mapping with $F\left(S\right)\neq\emptyset$ such that $2\max\{\Vert p-x_0\Vert,\Vert p-u\Vert\}\le M$ for some fixed point $p$ of $S$. Suppose that the sequence $\left(z_{\nicefrac{1}{m}}\right)_{m\ge1}$ of Theorem \ref{LemmaReich} is Cauchy with metastability $K$, \textit{i.e.},
\begin{equation*}
 \forall\varepsilon>0\forall g:\NN\to\NN\exists n\leq K\left(\varepsilon,g\right)\forall k,l\in\left[n,n+g\left(n\right)\right]\left\Vert z_{\nicefrac{1}{k}}-z_{\nicefrac{1}{l}}\right\Vert\leq\varepsilon.
\end{equation*}
Assume that $\left(\alpha_n\right)\subset\left(0,1\right)$, $R_1$ to $R_3$ satisfy conditions \ref{I} and \ref{II} of the previous proposition. Moreover, suppose that
\begin{equation*}
 0<E\left(k\right)\leq\prod\limits_{n=0}^{k}\left(1-\alpha_n\right).
\end{equation*}
Then the sequence $\left(x_n\right)$ generated by
\begin{equation*}
 x_{n+1}:=\alpha_nu+\left(1-\alpha_n\right)Sx_n
\end{equation*}
converges strongly to fixed point of $S$. Moreover,
\begin{equation}\label{meta}
 \forall\varepsilon>0\forall g:\NN\to\NN\exists n\leq \Sigma\left(\varepsilon,g,M,K,E,R_1,R_2,R_3,\omega\right)\forall k,l\in\left[n,n+g\left(n\right)\right]\Bigl(\left\Vert x_k-x_l\right\Vert\leq\varepsilon\Bigr),
\end{equation}
where $\Sigma:=\max\left\{R_2\left(\frac{E\left(k\right)\cdot\varepsilon^2}{12M^2}\right),\,\tilde \Gamma\leq k\leq \Gamma\right\}$ and
\begin{align*}
 \varepsilon_0&:=\min\left\{\delta,\omega\left(M,\delta\right)\right\},\quad\delta:=\frac{\varepsilon^2}{144M},\quad D\left(\varepsilon\right):=E\left(R_3\left(\varepsilon/3M\right)\right),\\
 f^*\left(k\right)&:=f\left(k+m_0\right)+m_0,\quad m_0:=\left\lceil\frac{72M^2}{\varepsilon^2}\right\rceil,\quad\tilde E\left(k\right):=E\left(\varphi(k)\right),\\
 f\left(k\right)&:=\left\lceil\max\left\{24M^2\cdot\frac{\max\left\{g^*\left(R_2\left(\frac{\tilde E\left(k\right)\cdot\varepsilon^2}{12M^2}\right)\right),\varphi(k)+1\right\}-\varphi(k)-1}{\varepsilon^2}-k,0\right\}\right\rceil,\\
 g^*\left(k\right)&:=k+g\left(k\right),\quad\varphi(k):=\psi\left(\frac{\varepsilon^2}{72Mk}\right)\\
 \Gamma&:=\max\left\{\varphi(k),\,m_0\leq k\leq K\left(\varepsilon_0,f^*\right)+m_0\right\},\\
 \tilde\Gamma&:=\min\left\{\varphi(k),\,m_0\leq k\leq K\left(\varepsilon_0,f^*\right)+m_0\right\}
\end{align*}
and $\psi$ is as defined in Proposition \ref{AsymptoticRegularity}, \textit{i.e.}, a rate of asymptotic regularity for $(x_n)$.
\end{theorem}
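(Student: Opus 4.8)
The plan is to mimic Xu's original convergence argument but to track everything quantitatively, replacing each limit by an explicit rate. First I would establish the basic boundedness facts exactly as in Proposition \ref{AsymptoticRegularity}: by induction $\Vert x_n-p\Vert\le M/2$, so $\Vert x_n-z_t\Vert\le M$ and $\Vert u-z_t\Vert\le M$ for the resolvent points $z_t$, and $\Vert x_n-Sx_n\Vert\to0$ with the rate $\psi$ already extracted. The core of Xu's proof is the inequality, obtained from Lemma \ref{PolaBana} applied to $x_{n+1}-z_t=(1-\alpha_n)(Sx_n-z_t)+\alpha_n(u-z_t)$ together with $z_t=tu+(1-t)Sz_t$, yielding a recursion of the shape
\begin{equation*}
 \Vert x_{n+1}-z_t\Vert^2\le(1-\alpha_n)\Vert x_n-z_t\Vert^2+\alpha_n\beta_n(t)+\gamma_n,
\end{equation*}
where $\beta_n(t)$ involves $\langle u-z_t, J(x_{n+1}-z_t)\rangle$ (bounded using the asymptotic regularity of $(x_n)$, the definition of $z_t$, and the modulus of continuity $\omega$ of $J$) and $\gamma_n$ collects error terms controlled by $\vert\alpha_n-\alpha_{n-1}\vert/\alpha_n$ or by $\alpha_n$ itself. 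Here is where the parameter $t=1/k$ enters and why $\varphi(k)=\psi(\varepsilon^2/72Mk)$ appears: to make $\beta_n(1/k)$ small one needs $n$ past $\varphi(k)$, and the factor $k$ reflects that $\Vert Sz_t-u\Vert$ can be as large as order $1/t$ times the resolvent displacement.

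Next I would feed this recursion into Lemma \ref{LemmaXuQuant} with the data $S_1=R_2$, $S_2,S_3$ chosen from the above estimates, and $D(\varepsilon)=E(R_3(\varepsilon/3M))$, to obtain: for each fixed $k$, once $n\ge$ (something built from $\varphi(k)$, $R_2$ applied to $\tilde E(k)\varepsilon^2/12M^2$, and $g$), we get $\Vert x_{n+1}-z_{1/k}\Vert^2\le\varepsilon^2/\text{const}$. This is exactly what the auxiliary function $f(k)$ is engineering — it is the least extra number of steps beyond $k$ needed so that $x$ has entered an $\varepsilon$-neighbourhood of $z_{1/k}$ and stays there over the relevant $g$-block. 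Then I would invoke the hypothesized metastability $K$ of $(z_{1/m})_{m\ge1}$ with counterfunction $f^*$ and precision $\varepsilon_0=\min\{\delta,\omega(M,\delta)\}$, $\delta=\varepsilon^2/144M$: this produces an index block $[k_0,k_0+f^*(k_0)]$ (with $m_0\le k_0\le K(\varepsilon_0,f^*)+m_0$) on which the $z_{1/k}$ are $\varepsilon_0$-close to one another. Combining "$x_n$ close to $z_{1/k}$'' with "$z_{1/k}$ close to $z_{1/l}$'' via the triangle inequality and squaring gives that $(x_n)$ is $\varepsilon$-Cauchy on the corresponding block of $x$-indices, and unwinding the definitions of $\varphi,\Gamma,\tilde\Gamma$ shows this block is the one witnessed by $\Sigma$; strong convergence then follows since $\varepsilon,g$ were arbitrary and $\Sigma$ does not depend on $g$ in the forbidden way.

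The main obstacle, and the part that requires genuine care rather than routine bookkeeping, is the passage from the $t$-indexed family $(z_t)$ to the single sequence $(z_{1/m})$ and the precise calibration of how small $t=1/k$ and how large $n$ must be \emph{jointly}. Concretely: the quantity $\langle u-z_t,J(x_{n+1}-z_t)\rangle$ must be shown to be eventually $\le\varepsilon_0$-ish, and this needs (a) $x_{n+1}$ to be $\omega(M,\cdot)$-close to $Sz_t$ — which uses asymptotic regularity at scale depending on $k$, hence the $1/k$ inside $\varphi$ — and (b) uniform continuity of $J$ on the $M$-ball to transfer that closeness through $J$. Getting the constants to line up so that the final triangle-inequality-plus-squaring yields exactly $\varepsilon$ (rather than some messy multiple) is the delicate step; the numerology $144$, $72$, $24$, $12$ in the statement is precisely the residue of this calibration, and I would expect to spend most of the proof verifying that each of $f,f^*,\varphi,\tilde E,\Gamma,\tilde\Gamma$ is chosen so that the chain of inequalities closes. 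A secondary subtlety is ensuring, as promised in the introduction, that $\Sigma$ depends on $g$ only through $g^*$ and the monotone majorant built into $f$ — i.e., that no hidden dependence on $g$ sneaks in through the choice of $k_0$ — which is why $K$ is applied to $f^*$, a function of $k$ alone once $g$ is fixed, rather than to something that would require case distinctions on the $x$-side.
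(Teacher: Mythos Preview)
Your overall architecture is right, but two of the load-bearing steps are misidentified, and as written the argument would not close.

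First, the recursion you plan to derive does not have the shape you describe. When you expand $\Vert x_{n+1}-z_m\Vert^2$ via Lemma~\ref{PolaBana} and use $z_m-Sz_m=\tfrac{1}{m}(u-Sz_m)$, the additive error is $\gamma_n=2M^2/m$, coming from the fact that $z_m$ is only an \emph{approximate} fixed point; it has nothing to do with $\vert\alpha_n-\alpha_{n-1}\vert/\alpha_n$. This $\gamma_n$ is constant in $n$, so $\sum\gamma_n$ diverges and Lemma~\ref{LemmaXuQuant} cannot be applied with any $S_3$. The paper instead uses only Lemma~\ref{ineq}, obtaining $\Vert x_n-z_m\Vert^2\le(\prod(1-\alpha_k))M^2+\varepsilon^2/12+2M^2(n-1-n_0)/m$, and the last term is made $\le\varepsilon^2/12$ on the whole $g$-block by choosing $m=P:=K_0+f(K_0)$ large enough relative to the block length. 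This is the actual reason $f$ has the form it has: $f(k)$ is not ``extra $x$-steps needed'' but rather the amount by which the $z$-index must be pushed so that $P$ absorbs the linearly accumulating error over $[n_1,n_1+g(n_1)]$.

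Second, the modulus $\omega$ does not enter where you say. It is not used to make $x_{n+1}$ close to $Sz_t$ and then push that through $J$. Rather, setting $\beta_n^m:=2\langle u-z_m,J(x_n-z_m)\rangle-M^2/m$, one first shows from the subdifferential inequality that $\beta_n^m\le 3mM\Vert Sx_n-x_n\Vert$, so asymptotic regularity at scale $\varepsilon^2/(72MK_0)$ gives $\beta_n^{K_0}\le\varepsilon^2/24$ for $n\ge n_0=\varphi(K_0)$. Then the metastability of $(z_m)$ at precision $\varepsilon_0\le\omega(M,\delta)$ gives $\Vert z_m-z_{K_0}\Vert\le\omega(M,\delta)$ for $m\in I$, whence $\Vert J(x_n-z_m)-J(x_n-z_{K_0})\Vert\le\delta$, and one computes $\beta_n^m-\beta_n^{K_0}\le\varepsilon^2/24$. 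So $\omega$ is what lets you transfer the single-index estimate at $K_0$ to the whole interval $I$, and in particular to $m=P$. Correspondingly, the final triangle inequality is not between two different resolvent points but via the single anchor $z_P$: one gets $\Vert x_n-z_P\Vert\le\varepsilon/2$ for all $n\in[n_1,n_1+g(n_1)]$, hence $\Vert x_k-x_l\Vert\le\varepsilon$.
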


\begin{proof}
Set $z_m:=z_{1/m}$. First observe that, as before, $\Vert x_n-p\Vert\le M/2$. Moreover,
\begin{equation*}
 \Vert z_m-p\Vert=\Vert \frac{1}{m}(u-p)+(1-\frac{1}{m})(Sz_m-Sp)\Vert\le \frac{1}{m}\Vert u-p\Vert+(1-\frac{1}{m})\Vert z_m-p\Vert.
\end{equation*}
Therefore, $\Vert z_m-p\Vert\le M/2$ for all $m$.

Now let $\varepsilon>0$ and $g:\NN\to\NN$ be given. Then there is some $K_1\leq K\left(\varepsilon_0,f^*\right)$ such that $\left\Vert z_k-z_l\right\Vert\leq\varepsilon_0$ for all $k,l\in\left[K_1,K_1+f^*\left(K_1\right)\right]$.
Set $K_0:=m_0+K_1\leq m_0+K\left(\varepsilon_0,f^*\right)$. Then, the interval $I:=\left[K_0,K_0+f\left(K_0\right)\right]=\left[K_1+m_0,K_1+m_0+f\left(K_1+m_0\right)\right]\subseteq\left[K_1,K_1+f^*\left(K_1\right)\right]$ and so we have both $\left\Vert z_m-z_{K_0}\right\Vert\leq\varepsilon_0$ for all $m\in I$, and $K_0\geq m_0\geq72M^2/\varepsilon^2$. Consequently, if we let
\begin{equation*}
 \beta_n^m:=2\left\langle u-z_m,J(x_n-z_m)\right\rangle-\frac{M^2}{m},
\end{equation*}
 then, since $\Vert z_{K_0}-z_m\Vert\le\varepsilon_0\le\omega(M,\delta)$, we see that $\left\langle u-z_m,J\left(x_n-z_m\right)-J\left(x_n-z_{K_0}\right)\right\rangle\le\Vert u-z_m\Vert\cdot\Vert J(z_{K_0}-z_m)\Vert\le 2M\delta$ for all $m\in I$,
\begin{align*}
\beta_n^m-\beta_n^{K_0}&=2\left\langle u-z_m,J\left(x_n-z_m\right)\right\rangle-2\left\langle u-z_{K_0},J\left(x_n-z_{K_0}\right)\right\rangle+\left(\frac{1}{K_0}-\frac{1}{m}\right)M^2\\
		       &\leq2\left\langle u-z_m,J\left(x_n-z_m\right)-J\left(x_n-z_{K_0}\right)\right\rangle+2\left\langle z_{K_0}-z_m,J\left(x_n-z_{K_0}\right)\right\rangle+\frac{M^2}{K_0}\\		       
		       &\leq\frac{\varepsilon^2}{72M}\cdot M+\frac{\varepsilon^2}{72M}\cdot M+\frac{\varepsilon^2}{72M^2}\cdot M^2=\frac{\varepsilon^2}{24}
\end{align*}
for all $m\in I$. Moreover, by the subdifferential inequality, we obtain analogously to \cite{Xu(02)},
 \begin{align*}
  \left\Vert z_m-x_n\right\Vert^2&\le(1-\frac{1}{m})^2\Vert Sz_m-x_n\Vert^2+\frac{2}{m}\left\langle u-x_n,J(z_m-x_n)\right\rangle\\
				  &\le(1-\frac{1}{m})^2\left(\Vert Sz_m-Sx_n\Vert+\Vert Sx_n-x_n\Vert\right)^2\\
				  &\quad+\frac{2}{m}\left(\Vert z_m-x_n\Vert^2+\langle u-z_m,J(z_m-x_n)\rangle\right)\\
				  &\le\left(1+\frac{1}{m^2}\right)\Vert z_m-x_n\Vert^2+\Vert Sx_n-x_n\Vert\left(2\Vert z_m-x_n\Vert+\Vert Sx_n-x_n\Vert\right)\\
				  &\quad+\frac{2}{m}\left\langle u-z_m,J(z_m-x_n)\right\rangle.
 \end{align*}
 Therefore, 
\begin{align*}
 2\left\langle u-z_m,J\left(x_n-z_m\right)\right\rangle&\leq \frac{1}{m}\left\Vert z_m-x_n\right\Vert^2+m\left\Vert Sx_n-x_n\right\Vert\left(2\left\Vert z_m-x_n\right\Vert+\left\Vert Sx_n-x_n\right\Vert\right)\\
	&\leq \frac{M^2}{m}+3Mm\left\Vert x_n-Sx_n\right\Vert,
\end{align*}
so $\beta_n^m\le3mM\Vert Sx_n-x_n\Vert$. Since $\psi$ is a rate of asymptotic regularity for $x_n$, we know that $\beta_n^{K_0}\leq\varepsilon^2/24$ for all $n\geq n_0:=\psi\left(\frac{\varepsilon^2}{72MK_0}\right)=\varphi(K_0)$ and so
$\beta_n^m\leq\frac{\varepsilon^2}{24}+\beta_n^{K_0}\leq\varepsilon^2/12$ for all $n\geq n_0$ and $m\in I$. Consequently, applying the subdifferential inequality twice yields
\begin{align*}
 \left\Vert x_{n+1}-z_m\right\Vert^2&=\left\Vert\left(1-\alpha_n\right)\left(Sx_n-z_m\right)+\alpha_n\left(u-z_m\right)\right\Vert^2\\
		&\leq\left(1-\alpha_n\right)^2\left\Vert Sx_n-z_m\right\Vert^2+2\alpha_n\left\langle u-z_m,J\left(x_{n+1}-z_m\right)\right\rangle\\
		&\leq\left(1-\alpha_n\right)^2\left\Vert Sx_n-Sz_m+Sz_m-z_m\right\Vert^2+\alpha_n\beta_{n+1}^m+\frac{\alpha_nM^2}{m}\\
		&\leq\left(1-\alpha_n\right)^2\left(\left\Vert Sx_n-Sz_m\right\Vert^2+2\left\langle Sz_m-z_m,J\left(Sx_n-z_m\right)\right\rangle\right)+\alpha_n\beta_{n+1}^m+\frac{\alpha_nM^2}{m}\\
		&\leq\left(1-\alpha_n\right)^2\left(\left\Vert x_n-z_m\right\Vert^2+\frac{2}{m}\left\langle Sz_m-u,J\left(Sx_n-z_m\right)\right\rangle\right)+\alpha_n\beta_{n+1}^m+\frac{\alpha_nM^2}{m}\\
		&\leq\left(1-\alpha_n\right)\left\Vert x_n-z_m\right\Vert^2+\alpha_n\beta_{n+1}^m+\frac{2M^2}{m}.
\end{align*} 
Thus, if we apply Lemma \ref{ineq} with $\gamma_n=\frac{2M^2}{m}$, we obtain for $n>n_0$
\begin{align}\label{ineq2}
 \left\Vert x_{n}-z_m\right\Vert^2&\leq\left(\prod_{k=n_0}^{n-1}\left(1-\alpha_k\right)\right)\left\Vert x_{n_0}-z_m\right\Vert^2+\left(1-\prod_{k=n_0}^{n-1}\left(1-\alpha_k\right)\right)\frac{\varepsilon^2}{12}+\frac{2M^2}{m}\left(n-1-n_0\right)\notag\\
				    &\leq\left(\prod_{k=n_0}^{n-1}\left(1-\alpha_k\right)\right)M^2+\frac{\varepsilon^2}{12}+\frac{2M^2}{m}\left(n-1-n_0\right).
\end{align}
Therefore, for all $n\geq n_1:=\max\left\{R_2\left(\frac{E\left(n_0\right)\cdot\varepsilon^2}{12M^2}\right),n_0+1\right\}$,
\begin{equation}\label{ineq3}
 \left\Vert x_{n}-z_m\right\Vert^2\leq\frac{\varepsilon^2}{6}+\frac{2M^2}{m}\left(n-1-n_0\right).
\end{equation}
Now observe that
\begin{align*}
 K_0+f\left(K_0\right)&=\left\lceil\max\left\{24M^2\cdot\frac{g^*\biggl(\max\left\{R_2\left(\frac{\tilde E\left(K_0\right)\cdot\varepsilon^2}{12M^2}\right),n_0\right\}\biggr)-\varphi(K_0)-1}{\varepsilon^2},K_0\right\}\right\rceil\\
		      &\geq24M^2\cdot\frac{g^*\biggl(\max\left\{R_2\left(\frac{E\left(n_0\right)\cdot\varepsilon^2}{12M^2}\right),n_0\right\}\biggr)-\varphi(K_0)-1}{\varepsilon^2}\\
		      &=24M^2\cdot\frac{g^*\left(n_1\right)-n_0-1}{\varepsilon^2}.
\end{align*}
Thus, if we set $P:=K_0+f\left(K_0\right)\in I$, we have
\begin{equation*}
 P\geq \frac{24M^2\left(g\left(n_1\right)+n_1-1-n_0\right)}{\varepsilon^2}.
\end{equation*}
Then, for all $n\in\left[n_1,n_1+g\left(n_1\right)\right]$
\begin{align*}
 \frac{2M^2}{P}\left(n-1-n_0\right)&\leq\frac{2M^2\left(n_1+g\left(n_1\right)-1-n_0\right)}{2\cdot12M^2\left(g\left(n_1\right)+n_1-1-n_0\right)}\varepsilon^2=\frac{\varepsilon^2}{12}.
\end{align*}
Then \eqref{ineq3} yields
\begin{equation*}
 \left\Vert x_n-z_P\right\Vert\leq\frac{\varepsilon}{2}.
\end{equation*}
Thus, for all $k,l\in\left[n_1,n_1+g\left(n_1\right)\right]$,
\begin{equation*}
 \left\Vert x_k-x_l\right\Vert\leq\left\Vert x_k-z_P\right\Vert+\left\Vert x_l-z_P\right\Vert\leq\varepsilon.
\end{equation*}
The claim follows from the observation that, since $m_0\le K_0\le m_0+K(\varepsilon_0,f^*)$, $n_0\in[\tilde\Gamma,\Gamma]$ and so $n_1\leq\Sigma\left(\varepsilon,g,M,R_1,R_2,R_3,\omega\right)$.
\end{proof}

As mentioned in the introduction, Kohlenbach and Leu\c{s}tean \cite{KohlenbachLeustean(12a)} extracted from a proof due to Shioji and Takahashi \cite{ShiojiTakahashi(97)} a bound for slightly different conditions on $(\alpha_n)$, which also include $\alpha_n=1/n$. If, furthermore, we restrict ourselves to a Hilbert space setting, then the metastability of the resolvent $(z_n):=(z_{\nicefrac{1}{n}})$ is known from the following
\begin{theorem}[Kohlenbach \cite{Kohlenbach(11)}]
 Let $X$ be a real Hilbert space and $C\subset X$ be a bounded closed convex subset with diameter $d_C\leq M$ and $S:C\to C$ be a nonexpansive mapping.
Then for all $\varepsilon>0$ and $g:\NN\to\NN$,
\begin{equation*}
 \exists n\leq K(\varepsilon,g):=\tilde g^{(\lceil M^2/\varepsilon^2\rceil)}\left(0\right)\forall i,j\in\left[n,n+g\left(n\right)\right]\bigl(\left\Vert z_i-z_j\right\Vert\bigr)
\end{equation*}
and $\tilde g\left(n\right)=\max\left\{n,g\left(n\right)\right\}$.
\end{theorem}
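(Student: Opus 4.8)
The plan is to reduce the metastable Cauchy property of $(z_n)$ to the metastable form of the monotone convergence principle (the one recorded on p.~30 of \cite{Kohlenbach(08)}), applied to the real sequence $b_n:=\Vert u-z_n\Vert^2$. The bridge is the quadratic estimate
\begin{equation*}
 \Vert z_k-z_l\Vert^2\le b_l-b_k\qquad(1\le k<l),
\end{equation*}
together with the observations that $(b_n)$ is nondecreasing and takes values in $[0,M^2]$ (since $u,z_n\in C$ and $\diam C\le M$). Granting these, a jump of $(b_n)$ exceeding $\varepsilon^2$ consumes a fixed share of the interval $[0,M^2]$, which caps the number of such jumps at $\lceil M^2/\varepsilon^2\rceil$ and gives exactly the stated bound.

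To obtain the estimate I would start from the algebraic identity
\begin{equation*}
 z_s-z_t=(s-t)(u-Sz_t)+(1-s)(Sz_s-Sz_t),
\end{equation*}
which is immediate from $z_r=ru+(1-r)Sz_r$. Pairing both sides with $z_s-z_t$ and using $\langle Sz_s-Sz_t,z_s-z_t\rangle\le\Vert z_s-z_t\Vert^2$ (Cauchy--Schwarz and nonexpansiveness), one gets after rearranging $s\Vert z_s-z_t\Vert^2\le(s-t)\langle u-Sz_t,z_s-z_t\rangle$. Since $u-Sz_t=(u-z_t)/(1-t)$, the polarization identity $\langle u-z_t,z_s-z_t\rangle=\tfrac12\bigl(\Vert u-z_t\Vert^2+\Vert z_s-z_t\Vert^2-\Vert u-z_s\Vert^2\bigr)$ turns this into
\begin{equation*}
 \frac{s+t-2st}{s-t}\,\Vert z_s-z_t\Vert^2\le\Vert u-z_t\Vert^2-\Vert u-z_s\Vert^2\qquad(0<t<s<1).
\end{equation*}
In particular $\Vert u-z_s\Vert\le\Vert u-z_t\Vert$ whenever $s>t$. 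Specializing $s=1/k$, $t=1/l$ with $1\le k<l$, the coefficient on the left becomes $\tfrac{l+k-2}{l-k}\ge1$ (this is where $k\ge1$ enters), and the two consequences are precisely that $(b_n)$ is nondecreasing and that $\Vert z_k-z_l\Vert^2\le b_l-b_k$. One also checks in the routine way that $x\mapsto tu+(1-t)Sx$ maps $C$ into $C$, so $z_t$ is well defined and lies in $C$; notably, no fixed point of $S$ is needed for this part.

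For the final step, fix $\varepsilon>0$ and $g:\NN\to\NN$. For any $n$ and any $i,j\in[n,n+g(n)]$, monotonicity of $(b_n)$ together with the estimate gives $\Vert z_i-z_j\Vert^2\le b_{\max\{i,j\}}-b_{\min\{i,j\}}\le b_{n+g(n)}-b_n$. Applying the metastable monotone convergence principle to $(b_n)\subseteq[0,M^2]$ with accuracy $\varepsilon^2$ then yields some $n\le\tilde g^{(\lceil M^2/\varepsilon^2\rceil)}(0)$ with $b_{n+g(n)}-b_n\le\varepsilon^2$, whence $\Vert z_i-z_j\Vert\le\varepsilon$ for all $i,j\in[n,n+g(n)]$, which is the claim. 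I expect the derivation of the quadratic inequality to be entirely routine; the genuine care lies in hitting on the right monotone quantity $\Vert u-z_n\Vert^2$ (so that the coefficient produced by polarization is $\ge1$ exactly for $k\ge1$, which is what makes the estimate clean), and in matching the precise inflationary function $\tilde g$ so that the pigeonhole over the iterates $0,\tilde g(0),\tilde g^{(2)}(0),\dots$ is governed by the length $\le M^2$ of the range of $(b_n)$ (note $b_1=0$, so at most $M^2$ is available). This bookkeeping is exactly what the metastable monotone convergence principle packages, so I would invoke it rather than redo the counting.
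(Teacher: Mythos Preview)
The paper does not give its own proof of this theorem; it is quoted verbatim from \cite{Kohlenbach(11)} and used as a black box. What the paper does say (in the introduction) is that this bound comes from analysing Halpern's elementary proof of Browder's theorem, whose sole ineffective ingredient is the monotone convergence principle, replaced by its metastable form from p.~30 of \cite{Kohlenbach(08)}. Your proposal carries out precisely this programme: the monotone quantity $b_n=\Vert u-z_n\Vert^2$ is the right one, your derivation of $\Vert z_k-z_l\Vert^2\le b_l-b_k$ via the identity $z_s-z_t=(s-t)(u-Sz_t)+(1-s)(Sz_s-Sz_t)$, nonexpansiveness, and polarization is correct (the coefficient $(l+k-2)/(l-k)\ge1$ for $k\ge1$ is exactly the point), and the finite pigeonhole over the iterates of $\tilde g$ then yields the stated bound. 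So your approach is correct and coincides with the one the paper alludes to.
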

\noindent Moreover, it is obvious that we may take $\omega=\operatorname{id}$ in this case. After lengthy, but trivial calculations, we see that the rate of metastability in our case is obtained from the counterfunction $g$ is modified to, essentially
\begin{equation*}
 f^*(k)=\frac{M^2}{\varepsilon^2}g\left(\frac{M^6k^2}{\varepsilon^6}\right)
\end{equation*}
and then iterated $M^3/\varepsilon^4$ many times before being multiplied by $M^6/\varepsilon^6$. In the Addendum \cite{KohlenbachLeusteanAddendum(12)} to \cite{KohlenbachLeustean(12a)}, the counterfunction is modified to essentially
\begin{equation*}
 f^*(k)=\frac{M^2}{\varepsilon^2}g\left(\frac{M^6k^2}{\varepsilon^4}\right),
\end{equation*}
which is slightly better, but here $f^*$ is iterated $M^4/\varepsilon^4$ times.

\section{Appendix}

  In this section, we show that a Banach space is smooth if and only if it has a norm-to-norm uniformly continuous duality selection mapping. It is well-known that if $X$ is a uniformly smooth Banach space, then the normalized duality map is single valued and norm-to-norm uniformly continuous. Thus we only need to show the converse direction. As a corollary, one then immediately concludes that a Banach space has at most one uniformly continuous duality selection mapping.
 
 The proof of the following theorem follows closely the work of Giles \cite{Giles(67)}.
 \begin{theorem}
 Let $X$ be a normed space with norm $\Vert \cdot\Vert$. If $X$ is a space with a norm-to-norm (uniformly) continuous duality selection map, it is (uniformly) smooth.
 \end{theorem}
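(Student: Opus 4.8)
The plan is to follow the classical argument of Giles \cite{Giles(67)} and show that for every $x,y$ on the unit sphere the two-sided G\^ateaux derivative $\lim_{t\to0}(\Vert x+ty\Vert-\Vert x\Vert)/t$ exists and equals $\langle y,Jx\rangle$; in the uniformly continuous case I would additionally keep all estimates quantitative in $t$ in order to make this convergence uniform in $x,y$. The starting point is a two-sided bound on the difference quotient that falls out of the subdifferential inequality: applying Lemma \ref{PolaBana} to the pair $(x,ty)$ gives $\Vert x+ty\Vert^2\le\Vert x\Vert^2+2t\langle y,J(x+ty)\rangle$, while applying it to the pair $(x+ty,-ty)$ gives $\Vert x\Vert^2\le\Vert x+ty\Vert^2-2t\langle y,Jx\rangle$, so that for every $t>0$
\begin{equation*}
 \langle y,Jx\rangle\ \le\ \frac{\Vert x+ty\Vert^2-\Vert x\Vert^2}{2t}\ \le\ \langle y,J(x+ty)\rangle .
\end{equation*}

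Next I would let $t\to0^+$. Since $J$ is norm-to-norm continuous we have $J(x+ty)\to Jx$, hence $\langle y,J(x+ty)\rangle\to\langle y,Jx\rangle$, and the squeeze theorem forces $\lim_{t\to0^+}(\Vert x+ty\Vert^2-\Vert x\Vert^2)/(2t)=\langle y,Jx\rangle$. Factoring $\Vert x+ty\Vert^2-\Vert x\Vert^2=(\Vert x+ty\Vert-\Vert x\Vert)(\Vert x+ty\Vert+\Vert x\Vert)$ and using $\Vert x+ty\Vert+\Vert x\Vert\to2$, this yields the right derivative $\lim_{t\to0^+}(\Vert x+ty\Vert-\Vert x\Vert)/t=\langle y,Jx\rangle$; replacing $y$ by $-y$ converts the same statement into one about the left derivative, which therefore coincides with the right one. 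Hence the norm is G\^ateaux differentiable at every unit vector, so $X$ is smooth.

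For the uniform assertion I would rerun this computation with uniform control of the error. Restricting to $\Vert x\Vert=\Vert y\Vert=1$ and $0<t\le1$ confines $x$ and $x+ty$ to the ball of radius $2$; given $\varepsilon>0$, uniform continuity of $J$ on bounded sets provides $\delta\in(0,1]$ with $\Vert J(x+ty)-Jx\Vert<\varepsilon$ whenever $t<\delta$, and the displayed sandwich then gives $\bigl|(\Vert x+ty\Vert^2-\Vert x\Vert^2)/(2t)-\langle y,Jx\rangle\bigr|<\varepsilon$ uniformly in $x,y$. Returning to the quotient of the norm itself costs only an extra error of order $t$, because $\bigl|\,\Vert x+ty\Vert-\Vert x\Vert\,\bigr|\le t$ and $\Vert x+ty\Vert+\Vert x\Vert\ge2-t\ge1$; a further shrinking of $\delta$ therefore makes $\bigl|(\Vert x+ty\Vert-\Vert x\Vert)/t-\langle y,Jx\rangle\bigr|$ uniformly small, and the case $-\delta<t<0$ follows from the $-y$ substitution. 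This is exactly uniform smoothness.

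The computation is essentially routine once Lemma \ref{PolaBana} is available; the only steps that need care are orienting the two applications of that inequality so that they bracket the difference quotient from both sides, and carrying the uniformity cleanly through the factorization $\Vert x+ty\Vert^2-\Vert x\Vert^2=(\Vert x+ty\Vert-\Vert x\Vert)(\Vert x+ty\Vert+\Vert x\Vert)$ and the sign change $t\mapsto-t$. Single-valuedness of the duality selection mapping is then immediate, since a smooth Banach space has a single-valued normalized duality map; this is precisely what yields the remarked corollary that $X$ admits at most one (uniformly) continuous duality selection mapping.
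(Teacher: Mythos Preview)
Your argument is correct and follows essentially the same Giles-style sandwich as the paper: both set up a two-sided estimate on the difference quotient at $x$ in direction $y$ whose endpoints collapse under (uniform) continuity of $J$. The only cosmetic difference is that you invoke Lemma~\ref{PolaBana} to bracket $(\Vert x+ty\Vert^2-\Vert x\Vert^2)/(2t)$ and then factor, whereas the paper brackets $(\Vert x+\lambda y\Vert-\Vert x\Vert)/\lambda$ directly via $\Vert Jx\Vert=\Vert x\Vert$, obtaining $\langle y,Jx\rangle/\Vert x\Vert$ and $\langle y,J(x+\lambda y)\rangle/\Vert x+\lambda y\Vert$ as the two bounds; neither route requires anything the other does not.
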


 \begin{proof}
   For $x,y\in X$, $\Vert x\Vert,\Vert y\Vert=1$, and real $\lambda>0$,
   \begin{align*}
    \frac{\left\Vert x+\lambda y\right\Vert-\left\Vert x\right\Vert}{\lambda}&=\frac{\left\Vert x+\lambda y\right\Vert\left\Vert x\right\Vert-\left\Vert x\right\Vert^2}{\lambda\Vert x\Vert}\\
									       &\ge\frac{\left\langle x+\lambda y,J(x)\right\rangle-\Vert x\Vert^2}{\lambda\Vert x\Vert}\\
									       &=\frac{\Vert x\Vert^2+\lambda\langle y,J(x)\rangle-\Vert x\Vert^2}{\lambda\Vert x\Vert}\\
									       &=\frac{\langle y,J(x)\rangle}{\Vert x\Vert}.
   \end{align*}
   On the other hand,
   \begin{align*}
    \frac{\left\Vert x+\lambda y\right\Vert-\left\Vert x\right\Vert}{\lambda}&=\frac{\Vert x+\lambda y\Vert^2-\Vert x\Vert\Vert x+\lambda y\Vert}{\lambda\Vert x+\lambda y\Vert}\\
									       &\le\frac{\langle x+\lambda y,J(x+\lambda y)\rangle-\langle x,J(x+\lambda y)\rangle}{\lambda\Vert x+\lambda y\Vert}\\
									       &=\frac{\langle y,J(x+\lambda y)\rangle}{\Vert x+\lambda y\Vert}.
   \end{align*}
  In total,
  \begin{equation*}
   \frac{\langle y,J(x)\rangle}{\Vert x\Vert}\le\frac{\Vert x+\lambda y\Vert-\Vert x\Vert}{\lambda}\le\frac{\langle y,J(x+\lambda y)\rangle}{\Vert x+\lambda y\Vert}.
  \end{equation*}
  Analogously, for real $\lambda<0$,
  \begin{equation*}
   \frac{\langle y,J(x)\rangle}{\Vert x\Vert}\ge\frac{\Vert x+\lambda y\Vert-\Vert x\Vert}{\lambda}\ge\frac{\langle y,J(x+\lambda y)\rangle}{\Vert x+\lambda y\Vert},
  \end{equation*}
  whence the (uniform) continuity of $J$ implies the (uniform) smoothness.
  \end{proof}

\textbf{Acknowledgements:} I am very grateful to Ulrich Kohlenbach for providing valuable input at various places of this paper.

\bibliographystyle{abbrv}
\bibliography{../FixedPointTheory.bib}{}

\begin{thebibliography}{10}

\bibitem{AvigadGerhardyTowsner(10)}
J.~Avigad, P.~Gerhardy, and H.~Towsner.
\newblock Local stability of ergodic averages.
\newblock {\em Trans. Amer. Math. Soc.}, 362:261--288, 2010.

\bibitem{Browder(67a)}
F.~E. Browder.
\newblock Convergence of approximants to fixed points of nonexpansive nonlinear
  mappings in {B}anach spaces.
\newblock {\em Arch. Ration. Mech. Anal.}, 24:82--90, 1967.

\bibitem{Giles(67)}
J.~R. Giles.
\newblock Classes of semi-inner-product spaces.
\newblock {\em Trans. Amer. Math. Soc.}, 129:436--446, 1967.

\bibitem{Halpern(67)}
B.~Halpern.
\newblock Fixed points of nonexpanding maps.
\newblock {\em Bull. Amer. Math. Soc.}, 73:957--961, 1967.

\bibitem{Kohlenbach(08)}
U.~Kohlenbach.
\newblock {\em Applied Proof Theory: Proof Interpretations and their Use in
  Mathematics}.
\newblock Springer Monographs in Mathematics, 2008.

\bibitem{Kohlenbach(10)}
U.~Kohlenbach.
\newblock On the logical analysis of proofs based on nonseparable {H}ilbert
  space theory.
\newblock In {\em Proofs, Categories and Computations. Essays in Honor of
  Grigori Mints}, pages 131--143. College Publications, 2010.

\bibitem{Kohlenbach(11)}
U.~Kohlenbach.
\newblock On quantitative versions of theorems due to {F}.{E}. {B}rowder and
  {R}. {W}ittmann.
\newblock {\em Adv. Math.}, 226:2764--2795, 2011.

\bibitem{KohlenbachLeustean(12)}
U.~Kohlenbach and L.~Leu\c{s}tean.
\newblock Effective metastability of {H}alpern iterates in {CAT(0)} spaces.
\newblock {\em Adv. Math.}, 231:2526--2556, 2012.

\bibitem{KohlenbachLeustean(12a)}
U.~Kohlenbach and L.~Leu\c{s}tean.
\newblock On the computational content of convergence proofs via {B}anach
  limits.
\newblock {\em Philos. Trans. R. Soc. Lond. Ser. A Math. Phys. Eng. Sci.},
  370:pp. 3449--3463, 2012.

\bibitem{KohlenbachLeusteanAddendum(12)}
U.~Kohlenbach and L.~Leu\c{s}tean.
\newblock Addendum to \cite{KohlenbachLeustean(12)}.
\newblock {\em To appear in: Advances in Mathematics}, 2013.

\bibitem{KohlenbachSafarik(13)}
U.~Kohlenbach and P.~Safarik.
\newblock Fluctuations, effective learnability and metastability in analysis.
\newblock {\em To appear in: Ann. Pure Appl. Logic}, 2013.

\bibitem{KoernleinKohlenbach(13)}
D.~Körnlein and U.~Kohlenbach.
\newblock Rate of metastability for {B}ruck's iteration of pseudocontractive
  mappings in {H}ilbert space.
\newblock {\em To appear in: Numer. Funct. Anal. Optim.}

\bibitem{Leustean(07)}
L.~Leu\c{s}tean.
\newblock Rates of asymptotic regularity for {H}alpern iterations of
  nonexpansive mappings.
\newblock {\em J.UCS}, 13:1680--1691, 2007.

\bibitem{Leustean(13)}
L.~Leu\c{s}tean and A.~Nicolae.
\newblock Effective results on nonlinear ergodic averages in {CAT($\kappa$)}
  spaces.
\newblock {\em eprint arXiv:1305.5916}, 2013.

\bibitem{Petryshyn(70)}
W.~V. Petryshyn.
\newblock A characterization of strict convexity of {B}anach spaces and other
  uses of duality mappings.
\newblock {\em Journal of Functional Analysis}, 6:282--291, 1970.

\bibitem{Reich(80)}
S.~Reich.
\newblock Strong convergence theorems for resolvents of accretive operators in
  {B}anach spaces.
\newblock {\em J. Math. Anal. Appl.}, 75:287--292, 1980.

\bibitem{Schade(12)}
K.~C. Schade.
\newblock Proof mining for {H}alpern iterations in {CAT(0)} spaces.
\newblock Master's thesis, TU Darmstadt, 2012.

\bibitem{ShiojiTakahashi(97)}
N.~Shioji and W.~Takahashi.
\newblock Strong convergence of approximated sequences for nonexpansive
  mappings in {B}anach spaces.
\newblock {\em Proc. Amer. Math. Soc.}, 125:3641--3645, 1997.

\bibitem{Tao(08a)}
T.~Tao.
\newblock Norm convergence of multiple ergodic averages for commuting
  transformations.
\newblock {\em Ergodic Theory Dynam. Systems}, 28:657--688, 2008.

\bibitem{Tao(08)}
T.~Tao.
\newblock Soft analysis, hard analysis, and the finite convergence principle.
  {E}ssay posted {M}ay 23, 2007. {A}ppeared in:.
\newblock {\em T. Tao, Structure and Randomness: Pages from Year One of a
  Mathematical Blog. AMS}, page 298pp., 2008.

\bibitem{Wittmann(92)}
R.~Wittmann.
\newblock Approximation of fixed points of nonexpansive mappings.
\newblock {\em Arch. Math. (Basel)}, 58:486--491, 1992.

\bibitem{Xu(02)}
H.-K. Xu.
\newblock Iterative algorithms for nonlinear operators.
\newblock {\em J. Lond. Math. Soc. (2)}, 66:240--256, 2002.

\end{thebibliography}
\end{document}